\newtheorem{defi}{Definition}[section]
\newtheorem{thm}[defi]{Theorem}
\newtheorem{prop}[defi]{Proposition}
\newtheorem{lemma}[defi]{Lemma}
\theoremstyle{remark}
\newtheorem{rem}[defi]{Remark}
\DeclareFontFamily{OT2}{cmr}{\hyphenchar\font45 }
\DeclareFontShape{OT2}{cmr}{m}{n}{%
	<5><6><7><8><9>gen*wncyr%
	<10><10.95><12><14.4><17.28><20.74><24.88>wncyr10}{}
\DeclareFontShape{OT2}{cmr}{b}{n}{%
	<5><6><7><8><9>gen*wncyb%
	<10><10.95><12><14.4><17.28><20.74><24.88>wncyb10}{}
\DeclareMathAlphabet{\mathcyr}{OT2}{cmr}{m}{n}
\DeclareMathAlphabet{\mathcyb}{OT2}{cmr}{b}{n}
\SetMathAlphabet{\mathcyr}{bold}{OT2}{cmr}{b}{n}
\title{Some remarks on
the order structures of 
multi-polylogarithms}
\date{\empty}
\author{Ken Kamano}
\begin{document}

\maketitle

\begin{abstract}

We consider multi-polylogarithm functions
which are slightly different from the ordinary ones.
These functions have two integral representations
and an order structure 
similar to those of multiple zeta star values.
We also give a
necessary and sufficient condition
for 
the set of values of our multi-polylogarithm functions
to be a dense set.
\end{abstract}

\section{Introduction}

We call a tuple $(k_1,\ldots ,k_r)$ of positive integers 
an \textit{index}.
Define the set $\mathcal{I}$ of all indices as 
\[ \mathcal{I}:=
\{ (k_1,\ldots, k_r) \mid r\ge 1, k_1,\ldots, k_r \ge 1\}\]
and its subset $\mathcal{S}$ as
\begin{align*}
\mathcal{S} &:=
\{ (k_1,\ldots, k_r) \mid r\ge 1, k_1\ge 2, k_2,\ldots, k_r \ge 1\}.
\end{align*}
For any index $\boldsymbol{k}
=(k_1,\ldots, k_r)\in \mathcal{S}$, 
\textit{multiple zeta star values} (MZSV)
are defined by
\[ \zeta^{\star}(k_1,\ldots ,k_r)
:=
 \sum_{m_1\ge \cdots \ge m_r>0}
\dfrac{1}{m_1^{k_1}\cdots m_r^{k_r}},
\]
which are analogues of \textit{multiple zeta
values} (MZV) defined by
\[ \zeta(k_1,\ldots ,k_r)
:=
 \sum_{m_1> \cdots > m_r>0}
\dfrac{1}{m_1^{k_1}\cdots m_r^{k_r}}.
\]
It is well-known that 
MZSV can be written as a
$\mathbb{Q}$-linear combination of MZV and 
vice versa, 
and they are represented by iterated integrals (see e.g., \cite{Zha}).
In \cite{Li} Li gave another integral representation
of MZSV. 

\begin{thm}[{\cite[Theorem 1.1 (ii)]{Li}}]
\label{thm:Li_integral}
Let $j_1,\ldots ,j_{2k}\in \mathbb{Z}_{>0}$ and 
$r=2k$.
Set $i_l:= j_1+\cdots +j_l$ $(1\le l \le 2k)$.
Then 
\begin{equation}\label{eq:Li's formula}
\begin{split}
&\displaystyle \int_{[0,1]^{i_r}}
\dfrac{1}{1-(x_1\cdots x_{i_1}) + (x_1\cdots x_{i_2})
- \cdots + (x_1\cdots x_{i_r}) } dx_1 \cdots dx_{i_r}\\
&=
\zeta^{\star}
(j_1+1, \overbrace{1,\ldots ,1}^{j_2-1},
\ldots ,j_{2k-1}+1, \overbrace{1,\ldots ,1}^{j_{2k}-1}).
\end{split}
\end{equation}
\end{thm}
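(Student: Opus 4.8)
The plan is to expand the integrand as a geometric series, integrate out the variables one block at a time, and close the argument by induction on $k$. Write $i_0:=0$ and, for $1\le l\le 2k$, put $p_l:=x_{i_{l-1}+1}\cdots x_{i_l}$, the product of the $j_l$ variables in the $l$-th block, so that $x_1\cdots x_{i_l}=p_1\cdots p_l$; telescoping the alternating sum in the denominator of (\ref{eq:Li's formula}) shows it equals $D:=1-p_1\bigl(1-p_2(1-\cdots(1-p_{2k})\cdots)\bigr)$. Setting $D_l:=1-p_l\bigl(1-p_{l+1}(\cdots(1-p_{2k})\cdots)\bigr)$, so that $D=D_1$, I record the elementary identity $1-D_l=p_lD_{l+1}$, and I note that $D_3$ is itself the ``$D$'' attached to the shorter tuple $(j_3,\dots,j_{2k})$, whose length $2(k-1)$ is again even. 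Since each $D_l$ lies in $[0,1]$ (induct downward from $D_{2k}=1-p_{2k}$ using $D_l=1-p_lD_{l+1}$) and $\sum_{m=0}^{M-1}(1-D)^m=\bigl(1-(1-D)^M\bigr)/D$ increases pointwise a.e.\ to $1/D$, monotone convergence reduces Theorem~\ref{thm:Li_integral} to the finite identity, valid for every $M\ge1$,
$$\int_{[0,1]^{j_1+\cdots+j_{2k}}}\sum_{m=0}^{M-1}(1-D)^m\,dx=\zeta^{\star}_{\le M}\!\bigl(j_1+1,\overbrace{1,\dots,1}^{j_2-1},\dots,j_{2k-1}+1,\overbrace{1,\dots,1}^{j_{2k}-1}\bigr),$$
where $\zeta^{\star}_{\le M}$ denotes the star multiple sum with the extra constraint that its largest summation index be $\le M$; I write $A_M(j_1,\dots,j_{2k})$ for the left-hand side. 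The hypothesis $j_1+1\ge2$ makes the corresponding $\zeta^{\star}$ finite, which is what legitimizes letting $M\to\infty$ at the end.

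The computational core is an integral lemma, to be proved by induction on $j$ (integrate one variable and use $\frac{1-a^{t}}{1-a}=\sum_{s=0}^{t-1}a^{s}$ with $a=1-c$): for any parameter $c$ and any integers $n\ge0$, $j\ge1$,
$$\int_{[0,1]^j}(1-c\,x_1\cdots x_j)^n\,dx=\sum_{n+1=m_1\ge m_2\ge\cdots\ge m_j\ge1}\frac{1}{m_1m_2\cdots m_j}\sum_{s=0}^{m_j-1}(1-c)^s .$$
Together with the obvious $\int_{[0,1]^j}(x_1\cdots x_j)^n\,dx=(n+1)^{-j}$, this is the only analytic input.

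Next I would establish the identity for $A_M$ by induction on $k$. The base case $k=1$ is immediate: $1-D=p_1(1-p_2)$ and the two blocks involve disjoint variables, so the lemma with $c=1$ gives $A_M(j_1,j_2)=\sum_{m=0}^{M-1}(m+1)^{-j_1}\sum_{m+1=n_1\ge\cdots\ge n_{j_2}\ge1}(n_1\cdots n_{j_2})^{-1}$, which after re-indexing $n_1=m+1$ is $\zeta^{\star}_{\le M}(j_1+1,1^{j_2-1})$. For the inductive step, expand $\sum_{m=0}^{M-1}(1-D)^m=\sum_{m=0}^{M-1}p_1^m D_2^m$: integrating the block-$1$ variables produces the factor $(m+1)^{-j_1}$; integrating the block-$2$ variables against $D_2^m=(1-p_2D_3)^m$ by the lemma with $c=D_3$ produces a weakly decreasing chain $m+1=n_1\ge\cdots\ge n_{j_2}\ge1$ and a factor $\sum_{s=0}^{n_{j_2}-1}(1-D_3)^s$; and since $1-D_3=p_3D_4$, integrating that factor over blocks $3,\dots,2k$ is precisely $A_{n_{j_2}}(j_3,\dots,j_{2k})$. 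Re-indexing $n_1=m+1$ then yields
$$A_M(j_1,\dots,j_{2k})=\sum_{M\ge n_1\ge\cdots\ge n_{j_2}\ge1}\frac{1}{n_1^{j_1+1}n_2\cdots n_{j_2}}\;A_{n_{j_2}}(j_3,\dots,j_{2k}),$$
and inserting the induction hypothesis $A_{n_{j_2}}(j_3,\dots,j_{2k})=\zeta^{\star}_{\le n_{j_2}}(j_3+1,1^{j_4-1},\dots,j_{2k-1}+1,1^{j_{2k}-1})$ and concatenating the nested summation ranges gives exactly the claimed $\zeta^{\star}_{\le M}$. Finally, $M\to\infty$ completes the proof.

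I expect the difficulty to be organizational rather than conceptual. One must keep straight that the odd-indexed blocks contribute only the exponent increments $j_{2a-1}+1$, which attach to the first summation variable produced by the immediately following even block, while the even-indexed blocks each produce, through the lemma, a fresh weakly decreasing chain of $j_{2a}$ summation variables; and one must verify that all of these chains splice into a single inequality $M\ge n_1\ge\cdots\ge n_{j_2}\ge(\text{first index of the tail})\ge\cdots\ge1$. It is equally important to confirm that the relabeling in the inductive step is legitimate, i.e.\ that $1-D_3$ really is the ``$1-D$'' belonging to $(j_3,\dots,j_{2k})$ — this is exactly where the identity $1-D_l=p_lD_{l+1}$ and the even length of the tuple are used. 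Beyond the single monotone-convergence step, every series that appears is a finite sum, so no further analytic care is needed.
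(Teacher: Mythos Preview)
Your argument is correct. The paper does not prove Theorem~\ref{thm:Li_integral} directly (it is quoted from \cite{Li}), but it proves the generalization Theorem~\ref{thm:integralprop}, and your approach is essentially the same as that proof: expand the integrand as a geometric series and peel off blocks of variables via an integral identity (your multi-variable lemma is the iterated form of the paper's Lemma~\ref{lemma:integration_lemma}), with the recursion you set up on $k$ playing the role of the paper's informal ``repeat this procedure.'' The only organizational differences are that you group the blocks in pairs---natural for the alternating-sign form of Li's integrand---and you track the truncated sums $\zeta^{\star}_{\le M}$ explicitly before passing to the limit, whereas the paper works with the $P_i$ form and handles one block per step.
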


\begin{rem}
    It is worth mentioning that this formula was essentially proved by 
Zlobin (see \cite[Lemma 2]{Zlo}).
\end{rem}

The order structures of MZV 
and MZSV have been studied by
Kumar \cite{Ku} and Li
\cite{Li}, respectively.
Define an order $\succ$ on $\mathcal{S}$
by
\[ (k_1, \ldots, k_r, k_{r+1})
\succ (k_1, \ldots, k_r)
\]
for any $(k_1, \ldots, k_r, k_{r+1}) \in \mathcal{S}$
and 
\[ (k_1, \ldots, k_r)
\succ (m_1, \ldots, m_s)
\]
if $k_i=m_i$ ($1\le i \le j-1$) and 
$k_j<m_j$ for some $j\ge 1$. 
By using Theorem \ref{thm:Li_integral}, Li proved the following theorem.
\begin{thm}[{\cite[Theorem 1.2]{Li}}]
For any 
$ (k_1, \ldots, k_r)$ and 
$(m_1, \ldots, m_s) \in \mathcal{S}$,
$ (k_1, \ldots, k_r)\succ (m_1, \ldots, m_s) $
if and only if 
\[ \zeta^{\star}(k_1,\ldots ,k_r)
> \zeta^{\star}(m_1,\ldots ,m_s). \]
\end{thm}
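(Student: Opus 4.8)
The plan is to work directly with the series definition of $\zeta^{\star}$ (Li's integral formula from Theorem~\ref{thm:Li_integral} could also be used, but the series makes the monotonicity transparent). First observe that $\succ$ is a total order on $\mathcal{S}$: given two distinct indices, either one is a proper prefix of the other, or they first disagree at some position $j$ with a strict inequality of the $j$-th entries; in particular exactly one of $\boldsymbol{k}\succ\boldsymbol{m}$ and $\boldsymbol{m}\succ\boldsymbol{k}$ holds. Hence it suffices to prove the single implication $\boldsymbol{k}\succ\boldsymbol{m}\Rightarrow\zeta^{\star}(\boldsymbol{k})>\zeta^{\star}(\boldsymbol{m})$, the converse following by trichotomy. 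Two elementary facts, both immediate from the series, will be used repeatedly: (a) appending an entry strictly increases the value, $\zeta^{\star}(k_1,\dots,k_r,k_{r+1})>\zeta^{\star}(k_1,\dots,k_r)$, because on factoring out the sum over $m_1\ge\cdots\ge m_r$ the inner sum $\sum_{0<n\le m_r}n^{-k_{r+1}}$ is $\ge 1$ and is already $>1$ for the term $m_1=\cdots=m_r=2$; and (b) $\zeta^{\star}$ is weakly decreasing in each argument, since termwise $n^{-c}\ge n^{-d}$ when $c\le d$ and $n\ge 1$. Iterating (a) gives $\zeta^{\star}(k_1,\dots,k_r)\ge\zeta^{\star}(k_1,\dots,k_j)$ for $j\le r$, which already disposes of the case in which $\boldsymbol{m}$ is a proper prefix of $\boldsymbol{k}$.

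The main case is $k_i=m_i$ for $i<j$ and $k_j<m_j$. The difficulty is that $\boldsymbol{m}$ may be far longer than $j$, and appending many $1$'s to an index can raise its value appreciably, so one cannot simply chain (a) and (b); the real point is to bound the combined effect of an arbitrarily long run of trailing $1$'s. To that end I would isolate the purely combinatorial estimate
\[
P_{m,t}:=\sum_{m\ge n_1\ge\cdots\ge n_t>0}\frac{1}{n_1\cdots n_t}\ \le\ m\qquad(m\ge 1,\ t\ge 0),
\]
with equality if and only if $m=1$. This is proved by induction on $t$ using $P_{m,0}=1$ and the recursion $P_{m,t}=\sum_{n=1}^{m}\tfrac1n P_{n,t-1}$: for $m\ge 2$ the $n=1$ summand contributes $1$, while $\sum_{n=2}^{m}\tfrac1n P_{n,t-1}<\sum_{n=2}^{m}\tfrac1n\cdot n=m-1$.

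Granting this, set $\boldsymbol{c}=(k_1,\dots,k_{j-1},k_j+1)$, which lies in $\mathcal{S}$. Grouping the series of $\zeta^{\star}(\boldsymbol{c},1,\dots,1)$ (with $t$ trailing ones) according to its first $j$ summation variables turns the tail into $P_{m_j,t}$, and the estimate $P_{m_j,t}\le m_j$ gives $\zeta^{\star}(\boldsymbol{c},1,\dots,1)\le\zeta^{\star}(k_1,\dots,k_j)$ for every $t\ge 0$, and strictly so, since the term $m_1=\cdots=m_j=2$ has $P_{2,t}<2$. Then by (b), lowering $m_{j+1},\dots,m_s$ to $1$ and afterwards raising the $j$-th argument from $k_j+1$ up to $m_j\ge k_j+1$ shows $\zeta^{\star}(m_1,\dots,m_s)\le\zeta^{\star}(k_1,\dots,k_{j-1},k_j+1,1,\dots,1)$; combining the two inequalities with a final application of (a), namely $\zeta^{\star}(k_1,\dots,k_j)\le\zeta^{\star}(\boldsymbol{k})$, yields $\zeta^{\star}(\boldsymbol{m})<\zeta^{\star}(\boldsymbol{k})$, as required.

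I expect the combinatorial lemma $P_{m,t}\le m$ to be the only genuine obstacle: it is the quantitative form of the statement that appending infinitely many $1$'s to an index whose last entry is $\ge 2$ produces exactly the value obtained by deleting those $1$'s and lowering that entry by one. Everything else reduces to termwise comparisons of the defining series. (Alternatively one could run essentially the same argument through the integral representation of Theorem~\ref{thm:Li_integral}; but the inequality between the two relevant integrands fails pointwise, so a comparison of the same flavour would still be needed.)
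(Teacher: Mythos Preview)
Your argument is correct. The reduction to the key inequality
\[
\zeta^{\star}(k_1,\dots,k_{j-1},k_j+1,\underbrace{1,\dots,1}_{t})<\zeta^{\star}(k_1,\dots,k_j)
\]
is exactly the skeleton the paper uses (see the chain of inequalities in the proof of the $l^{\star}$-order theorem, and Lemma~\ref{lemma:1^m_inequality} for the specialization $z_i=1$). The difference lies in how this key inequality is obtained. You prove it by the elementary series estimate $P_{m,t}=\sum_{m\ge n_1\ge\cdots\ge n_t>0}(n_1\cdots n_t)^{-1}\le m$, which is neat and avoids any integral machinery. The paper instead derives it from the integral representation of Theorem~\ref{thm:integralprop}: writing $1-\sum P_i=1-\sum Q_i$, the extra terms $Q_r,\dots,Q_{r+m}$ attached to the trailing $1$'s are each $\le 0$ on $[0,1]^k$, so the integrand for the longer index is pointwise dominated by that for the shorter one, and a single change of variable $x_{K_r-1}\mapsto 1-x_{K_r-1}$ identifies the resulting integral with $l^{\star}$ at $(k_1,\dots,k_{r-1},k_r-1)$.

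Your closing parenthetical is therefore a little off: the integral route \emph{does} proceed by a pointwise comparison of integrands, once one rewrites $\sum P_i$ as $\sum Q_i$ and observes that the tail contributions are non-positive. What the integral approach buys is that the same proof goes through verbatim for the $l^{\star}$-functions with arbitrary $1\ge z_1\ge\cdots\ge z_r>0$, where your $P_{m,t}$-bound would need to be replaced by a weighted version; what your approach buys is that, for the pure MZSV statement, nothing beyond the defining series is required.
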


Let 
\[
\mathcal{T}:=
\{ (k_1,k_2,\ldots )\in \mathbb{Z}_{>0}^{\infty} \mid
k_1\ge 2, \ k_s\ge 2 \text{ for some } s\ge 2 \text{ if } k_1=2
\} .
\]
One can extend the order on $\mathcal{S}$
naturally to an unique order on $\mathcal{T}$.
Then the following theorem holds.
\begin{thm}[{\cite[Theorem 1.3]{Li}}]
A map
\begin{align*}
\eta: \mathcal{T} &\longrightarrow (1, +\infty) \\
\boldsymbol{k} = (k_1,k_2,\ldots ) &\longmapsto 
\lim_{r\to\infty} \zeta^{\star}(k_1,k_2, \ldots , k_r)
\end{align*}
is bijective
and $\boldsymbol{k}\succ \boldsymbol{m}$
if and only if $\eta(\boldsymbol{k})> \eta(\boldsymbol{m})$.
\end{thm}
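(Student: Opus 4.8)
The plan is to establish three things: that $\eta$ maps $\mathcal T$ into $(1,+\infty)$; that $\eta$ is strictly order-preserving (which gives injectivity and, since $\succ$ is a total order, the displayed equivalence in both directions); and that $\eta$ is onto. I work with the natural order on $\mathcal T$: for distinct $\boldsymbol k,\boldsymbol m$, if $j$ is the first index with $k_j\ne m_j$ then $\boldsymbol k\succ\boldsymbol m$ iff $k_j<m_j$ (a strict total order agreeing with $\succ$ on all truncations). Two elementary facts are used throughout. (i) $\zeta^{\star}(k_1,\dots,k_r)$ is strictly increasing in $r$ — isolating the terms with smallest summation variable $=1$ recovers $\zeta^{\star}(k_1,\dots,k_{r-1})$ and leaves a strictly positive remainder — so $\eta(\boldsymbol k)=\sup_r\zeta^{\star}(k_1,\dots,k_r)$. (ii) With $S_n(N)=\sum_{N\ge m_1\ge\dots\ge m_n\ge1}(m_1\cdots m_n)^{-1}$ one has $S_n(N)\nearrow N$ as $n\to\infty$ (sort a weakly decreasing tuple by how many entries equal $1$: each such contributes a unit, and $\sum_{j\ge0}\sum_{N\ge m_1\ge\dots\ge m_j\ge2}(m_1\cdots m_j)^{-1}=\prod_{a=2}^{N}(1-a^{-1})^{-1}=N$). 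Monotone convergence then gives, for any index $\boldsymbol a=(a_1,\dots,a_p)$ with $a_p\ge2$,
\[
\mu(\boldsymbol a):=\lim_{n\to\infty}\zeta^{\star}\!\bigl(a_1,\dots,a_p,\underbrace{1,\dots,1}_{n}\b
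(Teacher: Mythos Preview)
Your proposal is truncated mid-display (it breaks off at ``$\mu(\boldsymbol a):=\lim_{n\to\infty}\zeta^{\star}\!\bigl(a_1,\dots,a_p,\underbrace{1,\dots,1}_{n}\b$''), so the surjectivity argument and the closing of the order-preservation step are missing and cannot be assessed.

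Note also that the paper does \emph{not} prove this theorem; it is quoted verbatim from \cite{Li}. The relevant comparison is with the paper's proofs of the analogous statements for $l^{\star}$, namely Proposition~\ref{prop:limit_prop}, Lemma~\ref{lemma:Delta_r}, the unlabelled order-preservation theorem, and Theorem~\ref{thm:mainthm_bijective}. What you have written so far follows exactly that template: your fact (i) is the trivial monotonicity in $r$, and your fact (ii)---the identity $\sum_{N\ge m_1\ge\cdots\ge m_n\ge 1}(m_1\cdots m_n)^{-1}\nearrow N$ via $\prod_{a=2}^{N}(1-a^{-1})^{-1}=N$---is precisely the computation appearing in the paper's proof of \eqref{eq:limit_lemma}. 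The intended consequence $\mu(a_1,\dots,a_p)=\zeta^{\star}(a_1,\dots,a_{p-1},a_p-1)$ is the MZSV case of Proposition~\ref{prop:limit_prop}(ii), and from there the paper's route to injectivity and surjectivity (sandwiching $x$ between $\zeta^{\star}(\boldsymbol k_{\uparrow})$ and $\zeta^{\star}(\boldsymbol k)$ or between $\zeta^{\star}(\boldsymbol k)$ and $\zeta^{\star}(\boldsymbol k_{\rightarrow})$, then invoking $\Delta_r\to 0$) is the standard one.

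One point you have not yet addressed in the visible text: well-definedness of $\eta$ on $\mathcal T$ requires the increasing sequence $\zeta^{\star}(k_1,\dots,k_r)$ to be \emph{bounded}. This is where the defining condition on $\mathcal T$ (either $k_1\ge 3$, or $k_1=2$ together with some later $k_s\ge 2$) is used, via the bound $\zeta^{\star}(k_1,\dots,k_r)\le\mu(k_1,\dots,k_s)=\zeta^{\star}(k_1,\dots,k_s-1)<\infty$. Make sure this appears explicitly once you complete the argument.
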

As a corollary of this theorem, Li also proved that 
the set $\{ \zeta^{\star}(\boldsymbol{k}) 
\mid \boldsymbol{k} \in \mathcal{S} \}$
is dense in $[1, \infty)$.
We remark that in \cite{HMO}
Hirose, Murahara and Onozuka
independently investigated MZSV of infinite length
$\zeta(\boldsymbol{k})$ with $\boldsymbol{k}\in
\mathcal{T}$ and 
gave some values explicitly.
\vspace{10pt}

Throughout this paper, 
we assume that 
$\boldsymbol{k}=(k_1,\ldots, k_r)\in
\mathbb{Z}_{>0}^r$ and $\boldsymbol{z}=(z_1,z_2,\ldots ,z_r)$
satisfy
\[ 1\ge z_1\ge z_2 \ge \cdots \ge z_r > 0\]
with $(k_1,z_1)\neq (1,1)$.
As an analogue of MZSV,
the following
multi-polylogarithms of shuffle type can be considered:
\begin{align}\label{eq:Li_shuffle}
\textrm{Li}_{\boldsymbol{k}}^{\mathcyr{sh},\star} (z_1, 
\ldots, z_r)
&:=
\sum_{m_1 \ge \cdots \ge m_r\ge 1}
 \dfrac{z_1^{m_1} (z_2/z_1)^{m_2}\cdots
(z_{r}/z_{r-1})^{m_r}}
{m_1^{k_1}\cdots m_r^{k_r}}.
\end{align} 
For any index $\boldsymbol{k}
\in \mathbb{Z}_{>0}^r$, 
we introduce the following function,
which is slightly different from 
$\textrm{Li}_{\boldsymbol{k}}^{\mathcyr{sh},\star}$
defined by \eqref{eq:Li_shuffle}:
\begin{align*}
l^{\star}
\begin{pmatrix}
k_1, \ldots , k_r \\
z_1, \ldots , z_r
\end{pmatrix}
&:=
\dfrac{1}{z_r}
\textrm{Li}_{\boldsymbol{k}}^{\mathcyr{sh},\star} (z_1, 
\ldots, z_r)\\
&=\sum_{m_1\ge \cdots \ge m_r \ge 1} 
\dfrac{z_1^{m_1-1} (z_2/z_1)^{m_2-1}\cdots
(z_{r}/z_{r-1})^{m_r-1}}
{m_1^{k_1}\cdots m_r^{k_r}}.
\end{align*}
For $k_1\ge 2$, the value
$\textrm{Li}_{\boldsymbol{k}}^{\mathcyr{sh},\star}
(1, \ldots, 1)=l^{\star}
\begin{pmatrix}
k_1, \ldots , k_r \\
1, \ldots , 1
\end{pmatrix}
$ is nothing but the 
multiple zeta star value
$\zeta^{\star}(k_1,\ldots, k_r)$.
As we will see in the next section,
the function $\textrm{Li}_{\boldsymbol{k}}^{\mathcyr{sh},\star}$ does not have an order structure similar to MZSV, but the function $l^{\star}$ does.

The present paper is organized as follows:
in Section \ref{Multiple polylogarithms}, 
we give two integral representations of 
the function $l^{\star}$
and show that it has an order structure 
similar to MZSV.
In Section \ref{Multiple polylogarithms of infinite length}, 
we introduce multiple polylogarithms of infinite length,
and give a necessary and
sufficient condition for the set of our multi-polylogarithm functions to be a dense set.

\section{Multiple polylogarithms}\label{Multiple polylogarithms}

We first see the iterated integral representation of 
the function $l^{\star}$. Let
\[ \omega_0(t)=\dfrac{1}{t},\ \ 
\omega_i(t)=\dfrac{1}{t(1-z_it)}\ (1\le i\le r-1),\ \ 
\omega_r(t)=\dfrac{1}{1-z_rt}.\] 
Define
\begin{align*}
I(\varepsilon_1, \ldots ,\varepsilon_k)
:=\displaystyle \int \cdots \int 
_{1>t_1>\cdots >t_k>0}
\omega_{\varepsilon_1}(t_1) \cdots 
\omega_{\varepsilon_k}(t_k),
\end{align*}
where each $\varepsilon_i \in \{ 0, 1, \ldots , r\}$.
By direct calculation, one can obtain the following 
iterated integral representation of $l^{\star}$.
\begin{prop}
For $(k_1, \ldots, k_r)\in \mathbb{Z}_{>0}^r$, we have

\begin{align}\label{eq:int_rep}
l^{\star}
\begin{pmatrix}
k_1, \ldots , k_r \\
z_1, \ldots , z_r
\end{pmatrix}
=
I(\overbrace{0,\ldots,0}^{k_1-1},\, 1,\, \overbrace{0,\ldots,0}^{k_2-1},
\, 2,\, 
\ldots , \overbrace{0,\ldots,0}^{k_r-1}, \,r \,).
\end{align}
\end{prop}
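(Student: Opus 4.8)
The plan is to expand every one-form in the iterated integral as a geometric series, perform all the integrations, and recognize the resulting multiple series as the defining series of $l^{\star}$. First I would set up notation: writing the iterated integral in \eqref{eq:int_rep} out, it is an integral over the simplex $1 > t_1 > \cdots > t_N > 0$ with $N = k_1 + \cdots + k_r$, where the integrand is a product of $k_i - 1$ copies of $\omega_0$ followed by $\omega_i$, for $i = 1, \ldots, r$. I would integrate from the innermost variable $t_N$ outward, keeping track of the "partial sums" $m_1 \ge m_2 \ge \cdots \ge m_r \ge 1$ that index the geometric expansions.

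The key computational step is the following. Using $\dfrac{1}{1 - z_r t} = \sum_{n \ge 0} z_r^n t^n$ and $\dfrac{1}{t(1 - z_i t)} = \sum_{n \ge 0} z_i^n t^{n-1}$ (valid since $0 < t < 1$ and $0 < z_i \le 1$; the factor $t^{-1}$ from $\omega_0$ and from $\omega_i$ is harmless because each $\omega_i$ with $i \ge 1$ is preceded by, or contributes, a $t$ that cancels it), one reduces each block $\overbrace{0, \ldots, 0}^{k_i - 1}, i$ to a single integration producing a factor $m_i^{-k_i}$ together with a power of $z_i$. Concretely, the innermost block contributes $\sum_{m_r \ge 1} z_r^{m_r - 1} \dfrac{t_{\bullet}^{m_r}}{m_r^{k_r}}$ evaluated against the next variable, and inductively the $i$-th block from the inside converts an incoming factor $t^{m_{i+1}}$ into $\sum_{m_i \ge m_{i+1}} (z_i/\text{(previous }z)\text{)}^{m_i - 1}\,\dfrac{t_{\bullet}^{m_i}}{m_i^{k_i}}$; here the constraint $m_i \ge m_{i+1}$ appears precisely because $\int_0^{t} s^{m-1}\,ds = t^m/m$ shifts exponents by one at each $\omega_0$ and the $\omega_i$-step forces the summation index to be at least the incoming exponent. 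Telescoping the ratios $z_1^{m_1 - 1}(z_2/z_1)^{m_2-1}\cdots(z_r/z_{r-1})^{m_r-1}$ emerges exactly as in the definition of $l^{\star}$, and the final integration over $t_1 \in (0,1)$ removes the last $t_1^{m_1}$ against $\omega_0(t_1) = t_1^{-1}$, leaving $1/m_1$ which completes the exponent $m_1^{-k_1}$.

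The main obstacle, and the place requiring care rather than ingenuity, is bookkeeping: matching the shift-by-one coming from each $\omega_0$-integration with the exponents $k_i$, and verifying that the inequality chain $m_1 \ge \cdots \ge m_r \ge 1$ is produced exactly (no off-by-one errors at the block boundaries, where the transition form $\omega_i$ with $i \ge 1$ sits). I would also need to justify interchanging the (convergent, nonnegative-term after fixing signs, or absolutely convergent) sums and integrals; since the hypothesis $1 \ge z_1 \ge \cdots \ge z_r > 0$ with $(k_1,z_1) \ne (1,1)$ guarantees convergence of the series defining $l^{\star}$, a dominated-convergence or Tonelli argument on the simplex handles this. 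Once the bookkeeping is pinned down the identity is immediate, so I would present the one-block computation in detail and then induct on $r$.
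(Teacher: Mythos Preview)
Your proposal is correct and is precisely the ``direct calculation'' the paper invokes; the paper does not spell out any further details, so your termwise geometric expansion and iterated integration from the innermost variable outward is exactly the intended argument. One small caveat in your write-up: the outermost form is $\omega_0$ only when $k_1\ge 2$; when $k_1=1$ the first form is $\omega_1$ (and for $r=1$, $k_1=1$ it is $\omega_r$), so your sentence about ``the final integration over $t_1$ removes the last $t_1^{m_1}$ against $\omega_0(t_1)$'' needs a minor case split---this is just the bookkeeping you already flag.
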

Since $\omega_i(t)
= \frac{1}{t}+\frac{1}{1-t}$ 
if $z_i=1$ ($1\le i \le r-1$), 
the equation \eqref{eq:int_rep}
for $k_1\ge 2$ and $z_1=\cdots =z_r=1$
coincides with
the iterated integral representation of MZSV.

The function $l^{\star}$ has 
the following another
integral representation as well as MZSV. 
When $k_1\ge 2$ and 
all $z_i=1$ ($1\le i \le r$), 
this representation coincides with 
Li's formula \eqref{eq:Li's formula}.
For an index $\boldsymbol{k}=(k_1,\ldots, k_r)\in
\mathbb{Z}_{>0}^r$,
set $K_i:=k_1+\cdots +k_i$ ($1\le i \le r$)
and $k:=K_r=k_1+\cdots +k_r$.
For $1\le i \le r$, we define $P_i((k_1, \ldots, k_r),(z_1,\ldots ,z_r);(x_1,\ldots, x_{k}))$,
simply denoted by $P_i$, as 
\[ P_i:=x_1\cdots x_{K_i-1}(1-x_{K_i})z_i \ \ (1\le i \le r).\]

Then another integral representation of 
$l^{\star}$ is as follows:
\begin{thm}\label{thm:integralprop}
For an index $\boldsymbol{k}=(k_1,\ldots, k_r)\in
\mathbb{Z}_{>0}^r$, 
we have
\begin{equation}\label{eq:anotherint_exp_l}
l^{\star}
\begin{pmatrix}
k_1, \ldots , k_r \\
z_1, \ldots , z_r
\end{pmatrix}= \displaystyle\int_{[0,1]^k}
\dfrac{1}{1- \sum_{i=1}^r P_i}\, d\boldsymbol{x},
\end{equation}
where 
$d\boldsymbol{x}=dx_1 dx_2\cdots dx_{k}$
\end{thm}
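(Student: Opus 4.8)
The plan is to reduce Theorem~\ref{thm:integralprop} to the already-established iterated integral representation \eqref{eq:int_rep} by performing an explicit change of variables. First I would expand the integrand $\frac{1}{1-\sum_{i=1}^r P_i}$ as a geometric series and integrate term by term; but in fact the cleaner route is to show directly that the right-hand side of \eqref{eq:anotherint_exp_l} equals $I(\overbrace{0,\ldots,0}^{k_1-1},1,\overbrace{0,\ldots,0}^{k_2-1},2,\ldots,\overbrace{0,\ldots,0}^{k_r-1},r)$. To do this I would introduce, on the simplex $1>t_1>\cdots>t_k>0$, the substitution $t_j = x_j x_{j+1}\cdots x_k$ for $1\le j\le k$, equivalently $x_j = t_j/t_{j+1}$ (with $t_{k+1}:=1$), whose Jacobian is the familiar $\prod_{j=2}^{k} t_j^{\,-1}$ mapping $[0,1]^k$ onto the ordered simplex. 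Under this map the product $x_1\cdots x_{m}$ becomes $t_1/t_{m+1}$, so each $P_i$ becomes $\frac{t_1}{t_{K_i}}\bigl(1-\frac{t_{K_i}}{t_{K_i+1}}\bigr)z_i = \frac{t_1(t_{K_i+1}-t_{K_i})z_i}{t_{K_i}t_{K_i+1}}$; the telescoping structure of $\sum P_i$ together with the Jacobian factors is what should reproduce the product of $\omega$-forms.

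The key computational step is therefore to verify that, after this substitution,
\[
\frac{1}{1-\sum_{i=1}^r P_i}\,\frac{dx_1\cdots dx_k}{1}
\;=\;\omega_{\varepsilon_1}(t_1)\cdots\omega_{\varepsilon_k}(t_k)\,dt_1\cdots dt_k,
\]
where $(\varepsilon_1,\ldots,\varepsilon_k)$ is the tuple $(\underbrace{0,\ldots,0}_{k_1-1},1,\ldots,\underbrace{0,\ldots,0}_{k_r-1},r)$ appearing in \eqref{eq:int_rep}. I expect the bookkeeping to go as follows: the Jacobian contributes $\prod_{j=2}^{k} t_j^{-1}$, which accounts for the $\frac1t$ factor of every $\omega_j$ with $j\neq k$; the positions $K_1,\ldots,K_{r-1}$ carry the extra $(1-z_i t_{K_i})$ in the denominator coming from $\omega_i$, the position $K_r=k$ carries the $\omega_r(t)=\frac1{1-z_r t}$ factor (and loses its $\frac1t$), and the remaining ``$0$'' positions are pure $\omega_0$'s. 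Matching these up forces the identity
\[
1-\sum_{i=1}^r P_i \;=\; \prod_{\text{appropriate }j}\bigl(1-z_{i(j)}t_j\bigr)\Big/(\text{monomial in the }t_j),
\]
which one checks by clearing denominators and using the telescoping $\sum_i (t_{K_i+1}-t_{K_i})/(t_{K_i}t_{K_i+1}) = \sum_i (1/t_{K_i} - 1/t_{K_i+1})$.

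An alternative and perhaps more transparent organization is to avoid \eqref{eq:int_rep} and instead expand $\frac{1}{1-\sum P_i} = \sum_{n\ge 0}\bigl(\sum_i P_i\bigr)^n$, expand the multinomial, and integrate each monomial $\prod x_j^{a_j}$ over $[0,1]^k$ using $\int_0^1 x^{a}dx = \frac{1}{a+1}$ together with the $(1-x_{K_i})$ factors giving Beta-function values; collecting terms indexed by the exponents $m_1\ge\cdots\ge m_r\ge 1$ should directly reproduce the defining series $\sum \frac{z_1^{m_1-1}(z_2/z_1)^{m_2-1}\cdots}{m_1^{k_1}\cdots m_r^{k_r}}$ of $l^{\star}$. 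I would likely present whichever of the two is shorter. The main obstacle in either case is purely combinatorial: tracking how the nested products $x_1\cdots x_{K_i-1}$ and the cut-off factors $(1-x_{K_i})$ interact across the different $i$, and confirming that the convergence hypotheses $1\ge z_1\ge\cdots\ge z_r>0$ with $(k_1,z_1)\neq(1,1)$ are exactly what is needed to justify interchanging sum and integral (equivalently, to ensure $\sum_i P_i<1$ a.e.\ on $[0,1]^k$ and the series converges). No genuinely deep idea is involved beyond the change of variables; it is a matter of carrying out the substitution cleanly and matching indices with \eqref{eq:int_rep}.
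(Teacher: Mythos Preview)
Your primary route---pulling the cube integral back to the simplex via $t_j=x_j\cdots x_k$ (or the reverse $t_j=x_1\cdots x_j$) and matching integrands with \eqref{eq:int_rep}---does not work: the identity you expect to ``check by clearing denominators'' is simply false. Under the substitution $t_j=x_1\cdots x_j$ one has $x_1\cdots x_m=t_m$ and hence $P_i=(t_{K_i-1}-t_{K_i})z_i$, so $1-\sum_i P_i$ is \emph{affine} in the $t_j$, whereas the product $\prod_i(1-z_it_{K_i})$ coming from the $\omega$-forms has genuine cross terms. Already for $r=2$, $k_1=k_2=1$ this gives $1-\sum P_i=(1-z_1)+(z_1-z_2)t_1+z_2t_2$, which cannot equal $(1-z_1t_1)(1-z_2t_2)$ divided by any monomial in $t_1,t_2$. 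The two representations \eqref{eq:int_rep} and \eqref{eq:anotherint_exp_l} agree only after integration, not pointwise under a monomial change of variables, so this reduction cannot succeed.

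Your alternative---expand $\tfrac{1}{1-\sum P_i}$ as a geometric series and integrate---is the route that actually works and is exactly what the paper does, but the organizing device is not a brute multinomial expansion. One instead observes the factorization
\[
\sum_{i=1}^r P_i \;=\; x_1\cdots x_{k_1-1}\,z_1\bigl[\,1-x_{k_1}(1-E)\,\bigr],\qquad E=\sum_{i=2}^{r}x_{k_1+1}\cdots x_{K_i-1}(1-x_{K_i})\tfrac{z_i}{z_1},
\]
integrates $(x_1\cdots x_{k_1-1}z_1)^{m_1-1}$ over $x_1,\ldots,x_{k_1-1}$ to obtain $z_1^{m_1-1}/m_1^{k_1-1}$, and then applies the elementary identity
\[
\int_0^1 \bigl(1-x(1-\alpha)\bigr)^{m-1}\,dx \;=\; \frac{1}{m}\sum_{j=1}^{m}\alpha^{\,j-1}
\]
(Lemma~\ref{lemma:integration_lemma}) to the $x_{k_1}$-integral, converting $[1-x_{k_1}(1-E)]^{m_1-1}$ into $\tfrac{1}{m_1}\sum_{m_2=1}^{m_1}E^{m_2-1}$. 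Since $E$ has exactly the same shape as $\sum P_i$ with depth $r-1$ and shifted arguments $z_i/z_1$, this yields a clean induction on $r$ producing the nested sum defining $l^{\star}$ directly. A raw multinomial expansion would in principle reach the same endpoint, but the $(1-x_{K_i})$ factors interact across blocks and the combinatorics is substantially heavier; Lemma~\ref{lemma:integration_lemma} is the missing step that makes the argument short.
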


To prove Theorem \ref{thm:integralprop}, we need the 
following lemma, which is a special case of 
\cite[Lemma 2.1]{Li}.
\begin{lemma}\label{lemma:integration_lemma}
For $\alpha\in\mathbb{C}$ and $m \in\mathbb{Z}_{>0}$, we have
\[ \int_0^1 (1-x(1-\alpha))^{m-1} \, dx
=  \frac{1}{m}\sum_{m_1=1}^m \alpha^{m_1-1}.\]  
\end{lemma}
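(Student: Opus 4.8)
The plan is to reduce the integral to an elementary antiderivative and then recognize the resulting rational expression as the stated geometric sum. First I would observe that, for each fixed real $x \in [0,1]$, the integrand $(1 - x(1-\alpha))^{m-1}$ is a polynomial in $\alpha$; hence the left-hand side, obtained by integrating over $x$, is itself a polynomial in $\alpha$ of degree $m-1$, as is the right-hand side. By the identity theorem for polynomials it therefore suffices to verify the claim for all $\alpha \neq 1$, and the value $\alpha = 1$ will follow automatically.

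Next, fixing $\alpha \neq 1$, I would compute the integral by the affine substitution $u = 1 - x(1-\alpha)$, which sends $x=0$ to $u=1$ and $x=1$ to $u=\alpha$. Since we are integrating a complex-valued function of the real variable $x$, the fundamental theorem of calculus applies to the real and imaginary parts separately, and $u^m/m$ is a genuine antiderivative of $u^{m-1}$. This yields
\[
\int_0^1 (1 - x(1-\alpha))^{m-1}\,dx = \frac{1}{1-\alpha}\int_{\alpha}^{1} u^{m-1}\,du = \frac{1 - \alpha^m}{m(1-\alpha)}.
\]
Applying the factorization $1 - \alpha^m = (1-\alpha)\sum_{m_1=1}^{m} \alpha^{m_1-1}$ and cancelling $1-\alpha$ gives exactly $\frac{1}{m}\sum_{m_1=1}^m \alpha^{m_1-1}$, as required.

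There is essentially no hard step here; the only point demanding care is the degenerate case $\alpha = 1$, where the substitution divides by zero. This case is settled in one line, either by the polynomial-continuity argument above or by direct evaluation: the left-hand side is $\int_0^1 1\,dx = 1$ and the right-hand side is $\frac{1}{m}\sum_{m_1=1}^m 1 = 1$. Should one prefer to avoid the case split entirely, an alternative is induction on $m$ via integration by parts, but the substitution is cleaner and I would adopt it.
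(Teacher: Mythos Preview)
Your proof is correct. The substitution $u=1-x(1-\alpha)$ (or equivalently, recognizing $-(1-x(1-\alpha))^m/(m(1-\alpha))$ as an antiderivative in the real variable $x$) yields $(1-\alpha^m)/(m(1-\alpha))$, and the geometric-sum factorization finishes the job; the degenerate case $\alpha=1$ is handled cleanly either by direct evaluation or by your polynomial-continuity observation.

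As for comparison: the paper does not actually supply a proof of this lemma. It simply records the identity as a special case of \cite[Lemma~2.1]{Li} and moves on to use it in the proof of Theorem~\ref{thm:integralprop}. Your argument is therefore more self-contained than what appears in the paper, and since it is a two-line elementary computation there is nothing to critique in the approach.
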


\begin{proof}[Proof of Theorem \ref{thm:integralprop}]
When $r=1$,
the right-hand side of 
\eqref{eq:anotherint_exp_l} equals 
\begin{align*}
\int_{[0,1]^{k_1}}
\frac{1}{1-x_1\cdots x_{k_1-1}(1-x_{k_1})z_1}\, dx_1\cdots 
dx_{k_1}
&=
\int_{[0,1]^{k_1}}
\frac{1}{1-x_1\cdots x_{k_1-1}x_{k_1}z_1}\, dx_1\cdots 
dx_{k_1}\\
&=
\int_{[0,1]^{k_1}}
\sum_{m=1}^{\infty}
(x_1\cdots x_{k_1-1}x_{k_1}z_1)^{m-1}\, dx_1\cdots 
dx_{k_1}\\
&=\sum_{m=1}^{\infty} \frac{z_1^{m-1}}{m^{k_1}}
\end{align*}
and the statement holds true.

When $r\ge 2$ we have
\begin{align*}
\dfrac{1}{1- \sum_{i=1}^r P_i}
&=
\sum_{m_1=1}^{\infty} 
\left(\sum_{i=1}^rP_i\right)^{m_1-1}\\
&=
\sum_{m_1=1}^{\infty} 
(x_1\cdots x_{k_1-1}z_1)^{m_1-1} \\
&\hspace{40pt}\times \Bigg( 
(1-x_{k_1}) + \left(x_{k_1}\cdots x_{K_2-1}(1-x_{K_2}) \tfrac{z_2}{z_1}\right)\\
&\hspace{100pt} +\cdots +\left(x_{k_1}\cdots x_{K_r-1}(1-x_{K_r}) \tfrac{z_r}{z_1}\right)
\Bigg)^{m_1-1}.
\end{align*}
By integrating both sides with respect to
$x_1, \ldots ,x_{k_1-1}$, we have
\begin{align*}
\int_{[0,1]^{k_1-1}} \dfrac{1}{1- \sum_{i=1}^r P_i}\,dx_1 \cdots dx_{k_1-1}
=
\sum_{m_1=1}^{\infty} 
\frac{z_1^{m_1-1}}{m_1^{k_1-1}} \Big\{ 1-x_{k_1} (1- E) \Big\}^{m_1-1},
\end{align*}
where 
\begin{align*}
E= x_{k_1+1}\cdots x_{K_2-1}(1-x_{K_2}) \tfrac{z_2}{z_1}
+\cdots 
+x_{k_1+1}\cdots x_{K_r-1}(1-x_{K_r}) \tfrac{z_r}{z_1}.
\end{align*}
By using Lemma \ref{lemma:integration_lemma},
we have
\begin{align*}
\int_{[0,1]^{k_1}} \dfrac{1}{1- \sum_{i=1}^r P_i}\,dx_1 \cdots dx_{k_1}=
\sum_{m_1=1}^{\infty} 
\frac{z_1^{m_1-1}}{m_1^{k_1}}
\sum_{m_2=1}^{m_1} E^{m_2-1}.
\end{align*}
Since the term $E$ is the form of
\[  \sum_{i=1}^{r-1} P_i\left((k_2,\ldots ,k_r),
   \left( \tfrac{z_2}{z_1},\ldots ,\tfrac{z_r}{z_1}\right);
   (x_{K_1+1}, \ldots ,x_{K_r})
\right),\]  
we can repeat this procedure 
and the equation
\[
\displaystyle\int_{[0,1]^k}
\dfrac{1}{1- \sum_{i=1}^rP_i} d\boldsymbol{x}=
\sum_{m_1=1}^{\infty} 
\frac{z_1^{m_1-1}}{m_1^{k_1}}
\sum_{m_2=1}^{m_1}
\frac{(\frac{z_2}{z_1})^{m_2-1}}{m_2^{k_2}}
\cdots 
\sum_{m_r=1}^{m_{r-1}}
\frac{(\frac{z_r}{z_{r-1}})^{m_r-1}}{m_r^{k_r}}
\]
is eventually obtained.
The right-hand side equals $l^{\star}
\begin{pmatrix}
k_1, \ldots , k_r \\
z_1, \ldots , z_r
\end{pmatrix}$ and this completes the proof.
\end{proof}

By shifting terms, we have
$\sum_{i=1}^r P_i = \sum_{i=0}^rQ_i$ where
\begin{align*}
 \begin{cases}
    Q_0 =x_1\cdots x_{k_1-1}z_1,\\
    Q_i =-x_1\cdots x_{K_i}z_i +x_1\cdots x_{K_{i+1}-1}z_{i+1}
    \ (1\le i \le r-1),\\
    Q_r =-x_1\cdots x_{K_r}z_r,
 \end{cases}
\end{align*}
and obtain the expression
 \begin{equation*}
l^{\star}
 \begin{pmatrix}
 k_1, \ldots , k_r \\
 z_1, \ldots , z_r
 \end{pmatrix}
 = \displaystyle\int_{[0,1]^k}
 \dfrac{1}{1- \sum_{i=0}^r Q_i} d\boldsymbol{x}.
 \end{equation*}
This is used in the proof of the following lemma.

\begin{lemma}\label{lemma:1^m_inequality}
For $(k_1,\ldots, k_r) \in \mathbb{Z}_{>0}^r$ with $k_r\ge 2$
and $m\ge 1$, we have
\[ l^{\star}
\begin{pmatrix}
k_1, \ldots , k_r,& \  \overbrace{1\ ,\  \ldots \  ,\ 1}^m \\
z_1, \ldots , z_r,& z_{r+1}, \ldots ,z_{r+m}
\end{pmatrix}
< l^{\star}\begin{pmatrix}
k_1, \ldots , k_{r-1},& k_r-1 \\
z_1, \ldots , z_{r-1},& z_r
\end{pmatrix}.
\]

\end{lemma}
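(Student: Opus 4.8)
The plan is to work with the $Q_i$-representation obtained just above,
\[
l^{\star}\begin{pmatrix}k_1,\ldots,k_r\\ z_1,\ldots,z_r\end{pmatrix}=\int_{[0,1]^{k}}\frac{d\boldsymbol{x}}{1-\sum_{i=0}^{r}Q_i},
\]
and to compare the two integrands pointwise. Set $N=k_1+\cdots+k_r$ and let $Q_0,\ldots,Q_{r+m}$ be the functions attached to the long index $(k_1,\ldots,k_r,\overbrace{1,\ldots,1}^{m})$; the point is that $Q_0,\ldots,Q_{r-1}$ are literally the same functions as for the short index $(k_1,\ldots,k_r)$ and depend only on $x_1,\ldots,x_{N-1}$. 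Two facts are needed. (1)~Under the standing ordering $1\ge z_1\ge\cdots\ge z_{r+m}>0$ one has $Q_i\le 0$ for every $i\ge 1$: such a $Q_i$ is either of the form $-x_1\cdots x_{a}z_i+x_1\cdots x_{b}z_{i+1}$ with $b\ge a$ (so $x_1\cdots x_b\le x_1\cdots x_a$ and $z_{i+1}\le z_i$ force $Q_i\le 0$), or is the single term $Q_{r+m}=-x_1\cdots x_{N+m}z_{r+m}$, which is moreover strictly negative on the open cube. (2)~A telescoping of the common partial sum $\sum_{i=0}^{r-1}Q_i$ — via the short-index identity $\sum_{i=0}^{r}Q_i=\sum_{i=1}^{r}P_i$, whose last $Q$-term is $-x_1\cdots x_{N}z_r$ and with $P_r=x_1\cdots x_{N-1}(1-x_N)z_r$ — gives $\sum_{i=0}^{r-1}Q_i=P_1+\cdots+P_{r-1}+x_1\cdots x_{N-1}z_r$, in which the variable $x_{N-1}$ occurs only in the last summand, since $P_1,\ldots,P_{r-1}$ involve only $x_1,\ldots,x_{K_{r-1}}$ with $K_{r-1}=N-k_r$.

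With (1) and (2) in hand I would argue as follows. By (1), $\sum_{i=r}^{r+m}Q_i\le 0$ on $[0,1]^{N+m}$, while $1-\sum_{i=0}^{r-1}Q_i>0$ a.e.\ (since $\sum_{i=0}^{r-1}Q_i\le Q_0=x_1\cdots x_{k_1-1}z_1\le 1$, with equality only on a null set); writing $1-\sum_{i=0}^{r+m}Q_i=\bigl(1-\sum_{i=0}^{r-1}Q_i\bigr)-\sum_{i=r}^{r+m}Q_i$ and using monotonicity of $t\mapsto 1/(1-t)$ gives, a.e.,
\[
\frac{1}{1-\sum_{i=0}^{r+m}Q_i}\ \le\ \frac{1}{1-\sum_{i=0}^{r-1}Q_i}.
\]
Integrating over $[0,1]^{N+m}$ and noting that the right-hand integrand does not involve $x_N,\ldots,x_{N+m}$, so that its integral equals $\int_{[0,1]^{N-1}}d\boldsymbol{x}\,/\,(1-\sum_{i=0}^{r-1}Q_i)$, we obtain
\[
l^{\star}\begin{pmatrix}k_1,\ldots,k_r,\overbrace{1,\ldots,1}^{m}\\ z_1,\ldots,z_{r+m}\end{pmatrix}\ \le\ \int_{[0,1]^{N-1}}\frac{d\boldsymbol{x}}{1-\sum_{i=0}^{r-1}Q_i}.
\]
Finally the substitution $x_{N-1}\mapsto 1-x_{N-1}$ replaces the summand $x_1\cdots x_{N-1}z_r$ by $x_1\cdots x_{N-2}(1-x_{N-1})z_r$ while fixing $P_1,\ldots,P_{r-1}$, so by Theorem~\ref{thm:integralprop} applied to the index $(k_1,\ldots,k_{r-1},k_r-1)$ the last integral equals $l^{\star}\begin{pmatrix}k_1,\ldots,k_{r-1},k_r-1\\ z_1,\ldots,z_{r-1},z_r\end{pmatrix}$. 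Strictness follows because $\sum_{i=r}^{r+m}Q_i<0$ a.e.\ (its term $-x_1\cdots x_{N+m}z_{r+m}$ already is), so the two integrands differ on a set of positive measure.

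The part I expect to require real care is the index bookkeeping behind fact~(2) and the collapse $\int_{[0,1]^{N+m}}\!\to\int_{[0,1]^{N-1}}$: one must track which partial products occur in each $Q_i$ and $P_i$, check that the telescoping is exactly as stated, and verify that the change of variables $x_{N-1}\mapsto 1-x_{N-1}$ is legitimate, i.e.\ that $x_{N-1}$ is absent from $P_1,\ldots,P_{r-1}$. This last point is exactly where the hypothesis $k_r\ge 2$ enters: it forces $K_{r-1}=N-k_r\le N-2$. As an independent check one can argue purely with series: grouping the defining series of the left-hand side by $m_1\ge\cdots\ge m_r$ factors out the tail sum $T(m_r)=\sum_{m_r\ge m_{r+1}\ge\cdots\ge m_{r+m}\ge 1}\prod_{j=1}^{m}(z_{r+j}/z_{r+j-1})^{m_{r+j}-1}/m_{r+j}$; an easy induction on $m$ gives $T(n)\le n$, with strict inequality once $n\ge 2$ (using $\sum_{j=1}^{n}1/j<n$ for $n\ge 2$), while the right-hand side is the same series with $m_r^{k_r}$ replaced by $m_r^{k_r-1}=m_r^{k_r}/m_r$; the difference is then a sum of nonnegative terms, strictly positive on the term $m_1=\cdots=m_r=2$.
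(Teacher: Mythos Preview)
Your proof is correct and follows essentially the same route as the paper: drop the nonpositive tail $Q_r,\ldots,Q_{r+m}$ from the $Q$-representation, then apply the change of variable $x_{K_r-1}\mapsto 1-x_{K_r-1}$ to identify the remaining integral with the $l^\star$ for $(k_1,\ldots,k_{r-1},k_r-1)$. The only cosmetic difference is that you pass through the $P$-telescoping to perform this last identification, whereas the paper rewrites $Q_{r-1}$ directly as $\overline{Q}_{r-1}$ and recognises the resulting expression as the $Q$-form for the shorter index; your added series check and explicit strictness argument are nice extras not spelled out in the paper.
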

\begin{proof}

When
$\boldsymbol{k}
=(k_1, \ldots , k_r, \  \overbrace{1, \ldots , 1}^m)$,
we have 
\[K_i=K_{i+1}-1\ \ (r\le i \le r+m-1).\]
Hence, 
for $0\le x_i \le 1$ \  ($1\le i \le K_{r+m}$),
the values
\begin{align*}
 Q_r&= x_1\cdots x_{K_r}(-z_{r+1}+z_r),\\
\vdots \\
 Q_{r+m-1}&= x_1\cdots x_{K_{r+m-1}}(-z_{r+m-1}+z_{r+m}),\\
 Q_{r+m}&= -x_1\cdots x_{K_{r+m}}z_{r+m}
\end{align*}
are all less than or equal to zero
and we have
\[ \sum_{i=0}^{r+m} Q_i \le \sum_{i=0}^{r-1} Q_i.  \]
By setting $x_{K_r-1} \mapsto 1-x_{K_r-1}$,
the value
$Q_{r-1} = -x_1\cdots x_{K_{r-1}}z_{r-1}
+x_1\cdots x_{K_r-1}z_r
$
is transformed as 
\begin{align*}
\overline{Q}_{r-1}&= -x_1\cdots x_{K_{r-1}}z_{r-1}
+x_1\cdots x_{K_r-2}(1-x_{K_r-1})z_r \\
&= -x_1\cdots x_{K_{r-1}}z_{r-1}
+x_1\cdots x_{K_r-2}z_r\\
&\ \ \ \ - x_1\cdots x_{K_r-1}z_r.
\end{align*}
Therefore 
\begin{align*}
\displaystyle\int \frac{1}{1-\sum_{i=0}^{r+m} Q_i}\,
d\boldsymbol{x}
<
\int \frac{1}{1-\sum_{i=0}^{r-1} Q_i}\,
d\boldsymbol{x}
=
\int \frac{1}{1-\sum_{i=0}^{r-2} Q_i - \overline{Q}_{r-1}}\,
d\boldsymbol{x}
\end{align*}
(an empty sum stands for zero)
and the last term is an integral form of
$l^{\star}\begin{pmatrix}
k_1, \ldots , k_{r-1},& k_r-1 \\
z_1, \ldots , z_{r-1},& z_r
\end{pmatrix}$.
\end{proof}

We extend the order $\succ$ on $\mathcal{S}$
to the one on $\mathcal{I}$ by the same rules.
Similar to MZSV,
the order $\succ$ on $\mathcal{I}$
preserves the order of the values of $l^{\star}$.
\begin{thm}
For any 
$\boldsymbol{k}=(k_1, \ldots, k_r)\in \mathbb{Z}_{>0}^r$ and 
$\boldsymbol{m}=(m_1, \ldots, m_s)\in \mathbb{Z}_{>0}^s$,
$\boldsymbol{k}\succ \boldsymbol{m}$
if and only if 
\begin{equation}\label{eq:lstar_orderpres}
    l^{\star}\begin{pmatrix}
k_1, \ldots , k_r\\
z_1, \ldots , z_r
\end{pmatrix}
> l^{\star}\begin{pmatrix}
m_1, \ldots , m_s\\
z_1, \ldots , z_s
\end{pmatrix}.
\end{equation} 
\end{thm}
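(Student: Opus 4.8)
The plan is to mimic Li's strategy for MZSV, using the two integral representations of $l^\star$ together with the two inequality lemmas already available. The statement is an "if and only if"; since the order $\succ$ is a total order on $\mathcal I$ and the relation "$>$ on the corresponding $l^\star$ values" is also a total relation (values are real and, generically, distinct), it suffices to prove one implication, say that $\boldsymbol k\succ\boldsymbol m$ implies \eqref{eq:lstar_orderpres}; the converse then follows by trichotomy (if $\boldsymbol k\not\succ\boldsymbol m$ then either $\boldsymbol k=\boldsymbol m$, whence the $l^\star$ values are equal, or $\boldsymbol m\succ\boldsymbol k$, whence the reverse strict inequality holds, contradicting \eqref{eq:lstar_orderpres}). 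So I will concentrate on the forward direction.

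By the definition of $\succ$ on $\mathcal I$, there are exactly two basic cases to treat, and the general case reduces to these by transitivity. Case (a): $\boldsymbol m=(k_1,\dots,k_r)$ and $\boldsymbol k=(k_1,\dots,k_r,k_{r+1},\dots)$ is a proper extension; it is enough to handle $\boldsymbol k=(k_1,\dots,k_r,k_{r+1})$ with one extra entry and iterate. Here I would use the iterated-integral representation \eqref{eq:int_rep}: appending one more block $(\overbrace{0,\dots,0}^{k_{r+1}-1},\,r{+}1)$ to the integrand replaces the innermost factor $\omega_r(t)=\frac1{1-z_rt}$ by a longer iterated integral whose value is strictly positive and which, compared termwise with the shorter one, increases it — concretely, in the series form $l^\star(\boldsymbol k;\boldsymbol z)=\sum_{m_1\ge\cdots\ge m_{r+1}\ge1}\prod (z_i/z_{i-1})^{m_i-1}/m_i^{k_i}$ the extra summation $\sum_{m_{r+1}=1}^{m_r}(z_{r+1}/z_r)^{m_{r+1}-1}/m_{r+1}^{k_{r+1}}$ has first term $1$ (from $m_{r+1}=1$) plus strictly positive remaining terms, so each partial-sum coefficient strictly increases. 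Case (b): $k_i=m_i$ for $i\le j-1$ and $k_j<m_j$. Peeling off the common prefix $x_1,\dots,x_{K_{j-1}}$ via the same iterated integration used in the proof of Theorem \ref{thm:integralprop}, one is reduced to comparing $l^\star$ of $(k_j,\dots,k_r)$ with that of $(m_j,\dots,m_s)$ where $k_j<m_j$; and since the $z$-variables for the two sides are the same rescaled tuple, this is exactly the statement for indices whose first entries differ. So it suffices to prove: if $k_1<m_1$ then $l^\star(k_1,\dots,k_r;\boldsymbol z)>l^\star(m_1,\dots,m_s;\boldsymbol z)$.

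For this last reduction I would combine Lemma \ref{lemma:1^m_inequality} with Case (a). By Case (a), $l^\star(m_1,\dots,m_s;z_1,\dots,z_s)$ is at most... no — rather, $l^\star(m_1,\dots,m_s)\le l^\star(m_1)$ fails in general; instead the right inequality direction is: extending increases the value, so $l^\star(m_1;z_1)<l^\star(m_1,\dots,m_s;z_1,\dots,z_s)$, which is the wrong way. The correct route is to go via Lemma \ref{lemma:1^m_inequality}: write $m_1=k_1+1+t$ for some $t\ge0$... more cleanly, it suffices to show the single-step inequality $l^\star(k_1+1,k_2,\dots,k_r;\boldsymbol z)<l^\star(k_1,k_2,\dots,k_r;\boldsymbol z)$ and $l^\star(k_1,m_2,\dots,m_s;\boldsymbol z)<l^\star(k_1;z_1)$-type bounds, but the genuinely clean statement to isolate and prove is:
\begin{equation}\label{eq:single-step}
l^\star\begin{pmatrix} k_1+1, k_2,\dots,k_r\\ z_1,z_2,\dots,z_r\end{pmatrix}
< l^\star\begin{pmatrix} k_1, k_2,\dots,k_r\\ z_1,z_2,\dots,z_r\end{pmatrix},
\end{equation}
together with: appending blocks of $1$'s after raising the first entry, i.e. an analogue of Lemma \ref{lemma:1^m_inequality} with the modified index $k_1$, which lets one absorb the entries $m_2,\dots,m_s$ of the larger index. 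I would prove \eqref{eq:single-step} from the series representation directly: $l^\star(k_1,\dots)=\sum_{m_1\ge1}\frac{z_1^{m_1-1}}{m_1^{k_1}}\,S(m_1)$ where $S(m_1)=\sum_{m_1\ge m_2\ge\cdots}\prod_{i\ge2}(z_i/z_{i-1})^{m_i-1}/m_i^{k_i}$ is positive and nondecreasing in $m_1$; raising $k_1$ by one multiplies the weight of each term by $1/m_1\le1$, strictly $<1$ for $m_1\ge2$, and since $S(m_1)$ is not concentrated at $m_1=1$ (the term $m_1=2$ already contributes, using $(k_1,z_1)\ne(1,1)$ and positivity), the sum strictly decreases.

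Finally I assemble: given $k_1<m_1$, apply \eqref{eq:single-step} repeatedly to climb from $l^\star(m_1,m_2,\dots,m_s)$ down past the entries $m_2,\dots,m_s$ using Lemma \ref{lemma:1^m_inequality} (first replace any trailing structure so the last entry is $\ge2$, or handle the $m_s=1$ tails directly by Lemma \ref{lemma:1^m_inequality}), obtaining $l^\star(m_1,m_2,\dots,m_s;\boldsymbol z) < l^\star(k_1,\dots;\boldsymbol z)$ step by step until the index is exactly $(k_1,\dots,k_r)$, then use Case (a) for the remaining extension entries $k_{r+1},\dots$ if $\boldsymbol k$ is longer. The main obstacle I anticipate is the bookkeeping in Case (b): making sure the $z$-rescaling $(z_1,\dots)\mapsto(z_j/z_{j-1},\dots)$ produced by peeling off the common prefix still satisfies the standing hypotheses $1\ge z_j/z_{j-1}\ge\cdots>0$ and $(k_j, z_j/z_{j-1})\ne(1,1)$ — the first is automatic from $z_{i}\le z_{i-1}$, but the non-degeneracy condition and the interplay between "raise the first entry" and "append $1$'s" (Lemma \ref{lemma:1^m_inequality} requires the entry being lowered to be $\ge2$) will need a careful case split, exactly as in Li's argument for MZSV. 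The inequalities themselves are soft consequences of termwise positivity and the two lemmas; the indexing is the real work.
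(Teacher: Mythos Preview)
Your proposal is essentially correct --- the ingredients you identify (Case~(a), the single-step exponent inequality, and Lemma~\ref{lemma:1^m_inequality}) are the right ones, and the trichotomy reduction to the forward implication is exactly what the paper does. But you make the assembly in Case~(b) much harder than it needs to be.

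The detour through ``peeling off the common prefix'' and rescaling the $z$-variables is unnecessary, and it is precisely what creates the bookkeeping worry you flag at the end (the non-degeneracy condition on $(k_j, z_j/z_{j-1})$, and whether the inner comparison holds for every fixed upper bound $m_{j-1}$, not just in the limit). The paper avoids all of this by working with the full indices throughout. Its proof of Case~(b) is a single three-line chain:
\[
l^{\star}\!\begin{pmatrix}k_1,\dots,k_r\\ z_1,\dots,z_r\end{pmatrix}
\;\ge\;
l^{\star}\!\begin{pmatrix}k_1,\dots,k_j\\ z_1,\dots,z_j\end{pmatrix}
\;>\;
l^{\star}\!\begin{pmatrix}k_1,\dots,k_{j-1},\,k_j+1,\,1,\dots,1\\ z_1,\dots,z_{j-1},\,z_j,\,z_{j+1},\dots,z_s\end{pmatrix}
\;\ge\;
l^{\star}\!\begin{pmatrix}m_1,\dots,m_s\\ z_1,\dots,z_s\end{pmatrix}.
\]
The first inequality is your Case~(a) (truncating decreases). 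The strict middle inequality is a single application of Lemma~\ref{lemma:1^m_inequality} with last entry $k_j+1\ge 2$. The final inequality is the termwise monotonicity you call \eqref{eq:single-step}, applied to each of the positions $j,j+1,\dots,s$ simultaneously (raise $k_j+1$ to $m_j$ and each trailing $1$ to $m_i\ge 1$; every summand in the series weakly decreases). No rescaling of $z$'s, no induction on the prefix length, no case split on whether $m_s=1$.

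So your route would work if carried through carefully, but it buys nothing over the paper's argument and costs you the extra verification that the rescaled $z$-tuple still satisfies the standing hypotheses. The cleaner move is to notice that ``increasing any exponent weakly decreases $l^\star$'' holds termwise for the \emph{full} series, so there is never a need to factor out the common prefix.
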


\begin{proof}
Assume that $\boldsymbol{k}\succ \boldsymbol{m}$.
When $r>s$ and $\boldsymbol{m}=(k_1,\ldots ,k_i)$
for some $i$ $(1\le i \le r-1)$,
obviously Eq.~\eqref{eq:lstar_orderpres} holds by definition.

When $\boldsymbol{m}=(k_1,\ldots ,k_{j-1},
m_j,\ldots , m_s)$
with $k_j<m_j$ for some $j$ $(1\le j \le \min\{r,s\})$,
by Lemma \ref{lemma:1^m_inequality},
we have
 \begin{align*}
 l^{\star}\begin{pmatrix}
  k_1, \ldots , k_r\\
  z_1, \ldots , z_r
 \end{pmatrix}
&\ge
 l^{\star}\begin{pmatrix}
  k_1, \ldots, k_{j-1}, k_j\\
  z_1, \ldots, z_{j-1},  z_j
 \end{pmatrix}\\
&>
 l^{\star}\begin{pmatrix}
  k_1, \ldots, k_{j-1}, k_j+1, \ \ 1,\ldots , 1\\
  z_1, \ldots, z_{j-1}, \ \  \ z_j, \ \ \ z_{j+1}, \ldots ,z_s
 \end{pmatrix}\\
&\ge
 l^{\star}\begin{pmatrix}
  k_1, \ldots, k_{j-1}, m_j, \ldots , m_s \\
  z_1, \ldots, z_{j-1}, \ z_j, \ldots , z_s
 \end{pmatrix}\\
&=
l^{\star}\begin{pmatrix}
m_1, \ldots , m_s\\
z_1, \ldots , z_s
\end{pmatrix}.
\end{align*}
Consequently
Eq. \eqref{eq:lstar_orderpres} also holds
and the theorem is proved.
\end{proof}

\begin{rem}
For the function
${\rm Li}_{\boldsymbol{k}}^{\mathcyr{sh},\star}$,
the statement of the theorem 
does not hold in general.
In fact, 
\begin{align*}
{\rm Li}_{2,1}^{\mathcyr{sh},\star} \left(\tfrac{2}{3}, \tfrac{1}{3}\right)
&=\sum_{m_1=1}^{\infty} \frac{(\frac{2}{3})^{m_1}}{m_1^2} \sum_{m_2=1}^{m_1}\frac{ (\frac{1}{2})^{m_2}}{m_2}\\
&< \sum_{m_1=1}^{\infty} \frac{(\frac{2}{3})^{m_1}}{m_1^2}\\
&={\rm Li}_{2}^{\mathcyr{sh},\star} \left(\tfrac{2}{3}\right)
\end{align*}
because of 
$\sum_{m_2=1}^{m_1}\frac{ (1/2)^{m_2}}{m_2}
<\sum_{m_2=1}^{\infty}\left(\frac{1}{2}\right)^{m_2}
=1$.
Therefore we have 
$(2,1) \succ (2)$ but 
${\rm Li}_{2,1}^{\mathcyr{sh},\star} \left(\tfrac{2}{3}, \tfrac{1}{3}\right)
< {\rm Li}_{2}^{\mathcyr{sh},\star} \left(\tfrac{2}{3}\right)$.
\end{rem}

\section{Multiple polylogarithms of infinite length}
\label{Multiple polylogarithms of infinite length}

For an integer $n\ge 1$, 
the notation $\{a\}^n$ means that 
the value $a$ is repeated $n$ times.
For example,
$l^{\star}
\begin{pmatrix}
2, \{1\}^{3}  \\
1, \{\tfrac{1}{2}\}^{3}  
\end{pmatrix}$
means $l^{\star}
\begin{pmatrix}
2, 1,1,1\\
1, \tfrac{1}{2}, \tfrac{1}{2},\tfrac{1}{2}  
\end{pmatrix}$.

\begin{prop}\label{prop:limit_prop}
\begin{enumerate}
\item 
For $0<z<1$, we have
$\displaystyle \lim_{n\to \infty}    
l^{\star}
\begin{pmatrix}
\{1\}^{n}  \\
\{z\}^{n}  
\end{pmatrix}
=\dfrac{1}{1-z}$.

\item 
For $k_r \ge 2$, we have
$\displaystyle \lim_{n\to \infty}    l^{\star}\begin{pmatrix}
k_1,\ldots , k_r, &\{1\}^n \\
z_1, \ldots, z_r, &\{z_r\}^n
\end{pmatrix}
=l^{\star}\begin{pmatrix}
k_1,\ldots , k_{r-1}, k_r-1\\
z_1, \ldots, z_{r-1}, \ z_r
\end{pmatrix}
$.

\item If $z_r> z_{r+1}$ then we have
\begin{align*}
&\displaystyle \lim_{n\to \infty}    l^{\star}\begin{pmatrix}
k_1,\ldots , k_r, &\{1\}^n \\
z_1, \ldots, z_r, &\{z_{r+1}\}^n
\end{pmatrix}\\
&= 
\dfrac{z_r}{z_r-z_{r+1}}
 l^{\star}\begin{pmatrix}
k_1,\ldots , k_r\\
z_1, \ldots, z_r
\end{pmatrix}
-\dfrac{z_{r+1}}{z_r-z_{r+1}}
l^{\star}\begin{pmatrix}
k_1,\ldots, k_{r-1}, k_r\\
z_1, \ldots, z_{r-1}, z_{r+1}
\end{pmatrix}.
\end{align*} 
\end{enumerate}

\end{prop}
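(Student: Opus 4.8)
The plan is to reduce all three statements to one elementary fact about the truncated multiple harmonic star sums
\[
S_n(m):=\sum_{m\ge j_1\ge\cdots\ge j_n\ge 1}\frac{1}{j_1\cdots j_n}\quad(m\ge 1,\ n\ge 0),\qquad S_0(m):=1 .
\]
\textbf{Key Lemma.} For each fixed $m\ge 1$ the sequence $\bigl(S_n(m)\bigr)_{n\ge 0}$ is non-decreasing and $\lim_{n\to\infty}S_n(m)=m$. To prove it I would group the terms of $S_{n+1}(m)$ by their first $n$ coordinates,
\[
S_{n+1}(m)=\sum_{m\ge j_1\ge\cdots\ge j_n\ge 1}\frac{1}{j_1\cdots j_n}\Bigl(\sum_{j_{n+1}=1}^{j_n}\frac{1}{j_{n+1}}\Bigr)\ \ge\ S_n(m),
\]
the bracketed factor being $\ge 1$; splitting off instead the first coordinate yields the recursion $S_n(m)=\sum_{j=1}^{m}S_{n-1}(j)/j$, whence $S_n(m)\le m$ for all $n$ by induction on $n$. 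Thus $S_\infty(m):=\lim_n S_n(m)$ exists in $[1,m]$, and letting $n\to\infty$ in the recursion gives $S_\infty(m)=\sum_{j=1}^m S_\infty(j)/j$, i.e.\ $S_\infty(m)\,\tfrac{m-1}{m}=S_\infty(m-1)$ for $m\ge 2$ together with $S_\infty(1)=1$; telescoping forces $S_\infty(m)=m$.

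With the lemma available, each part follows by expanding the left-hand $l^{\star}$ through its defining series and isolating the tail over the last $n$ summation variables. In part (ii) the appended entries are $1$'s and the appended ratios $z_{r+j}/z_{r+j-1}$ are all $1$, so the summand factors:
\[
l^{\star}\!\begin{pmatrix}k_1,\ldots,k_r,\{1\}^n\\ z_1,\ldots,z_r,\{z_r\}^n\end{pmatrix}=\sum_{m_1\ge\cdots\ge m_r\ge 1}\frac{z_1^{m_1-1}(z_2/z_1)^{m_2-1}\cdots(z_r/z_{r-1})^{m_r-1}}{m_1^{k_1}\cdots m_r^{k_r}}\,S_n(m_r).
\]
All summands are non-negative and $S_n(m_r)\uparrow m_r$, so by monotone convergence the limit equals $\sum\frac{z_1^{m_1-1}\cdots(z_r/z_{r-1})^{m_r-1}}{m_1^{k_1}\cdots m_r^{k_r-1}}=l^{\star}\!\begin{pmatrix}k_1,\ldots,k_{r-1},k_r-1\\ z_1,\ldots,z_{r-1},z_r\end{pmatrix}$ (this is where $k_r\ge 2$ is needed, to keep the reduced tuple an index). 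Part (i) is the case of a pure string of $1$'s, where the summand is $z^{m_1-1}/(m_1\cdots m_n)$; peeling off $m_1$ leaves the tail $S_{n-1}(m_1)\uparrow m_1$, so the limit is $\sum_{m_1\ge 1}z^{m_1-1}=1/(1-z)$.

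Part (iii) is the same computation with a geometric factor $w:=z_{r+1}/z_r\in(0,1)$ attached to the first tail variable; the tail then equals $\sum_{j=1}^{m_r}\tfrac{w^{j-1}}{j}S_{n-1}(j)\uparrow\sum_{j=1}^{m_r}w^{j-1}=\tfrac{1-w^{m_r}}{1-w}$, so monotone convergence yields
\[
\lim_{n\to\infty}l^{\star}\!\begin{pmatrix}k_1,\ldots,k_r,\{1\}^n\\ z_1,\ldots,z_r,\{z_{r+1}\}^n\end{pmatrix}=\frac{1}{1-w}\,l^{\star}\!\begin{pmatrix}k_1,\ldots,k_r\\ z_1,\ldots,z_r\end{pmatrix}-\frac{1}{1-w}\sum_{m_1\ge\cdots\ge m_r\ge 1}\frac{z_1^{m_1-1}\cdots(z_r/z_{r-1})^{m_r-1}w^{m_r}}{m_1^{k_1}\cdots m_r^{k_r}}.
\]
The elementary identity $(z_r/z_{r-1})^{m_r-1}w^{m_r}=(z_{r+1}/z_r)\,(z_{r+1}/z_{r-1})^{m_r-1}$ (with the convention $z_0:=1$ when $r=1$) rewrites the remaining sum as $(z_{r+1}/z_r)\,l^{\star}\!\begin{pmatrix}k_1,\ldots,k_{r-1},k_r\\ z_1,\ldots,z_{r-1},z_{r+1}\end{pmatrix}$, and then $\tfrac{1}{1-w}=\tfrac{z_r}{z_r-z_{r+1}}$ turns the two terms into precisely the asserted formula.

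Two routine points remain. First, one must check that each claimed right-hand side is finite, so that monotone convergence delivers a genuine limit; this is immediate from the standing assumptions $1\ge z_1\ge z_2\ge\cdots$ and $(k_1,z_1)\neq(1,1)$ (the only degenerate case is $r=1$ with $k_r=2$ and $z_r=1$ in (ii), where both sides equal $+\infty$ consistently). Second, one must verify the three elementary algebraic identities for the weights used above. I expect the Key Lemma to be the only real content; everything afterwards is bookkeeping with non-negative series, so no domination or interchange subtleties arise. One could also argue through the integral representation \eqref{eq:anotherint_exp_l}: for (i) and (ii) this becomes the limiting form of the proof of Lemma \ref{lemma:1^m_inequality} (replace its strict inequality by $x_1\cdots x_{K_r+n}z_r\to 0$ and apply dominated convergence), while for (iii) one integrates out the variable $x_{K_r}$ and observes that the two resulting logarithms telescope; but the series route above is shorter and fully self-contained.
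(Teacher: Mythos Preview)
Your proof is correct and follows essentially the same route as the paper's: both factor the series so that the appended tail of $1$'s contributes the quantity $S_n(m_r)$, whose limit $S_\infty(m_r)=m_r$ then yields the master formula \eqref{eq:limit_lemma}, from which (i)--(iii) are read off by specializing $z_{r+1}$. The only cosmetic difference is in how that limit is computed---the paper groups the tail indices by value to obtain the telescoping product $\prod_{l=2}^{m}\tfrac{l}{l-1}=m$, whereas you derive it from the first-index recursion $S_\infty(m)\tfrac{m-1}{m}=S_\infty(m-1)$; both are one-line arguments.
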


\begin{proof}
We first show the equation
\begin{equation}\label{eq:limit_lemma}
\begin{split}
& \lim_{n\to \infty} l^{\star}
\begin{pmatrix}
k_1,\ldots , k_r, \ \ \{1\}^n\ \ \\
z_1, \ldots, z_r, \{z_{r+1}\}^n
\end{pmatrix} \\
&=
\sum_{m_1\ge \cdots \ge m_{r}\ge 1}
\frac{z_1^{m_1-1} (\frac{z_2}{z_1})^{m_2-1} \cdots (\frac{z_{r}}{z_{r-1}})^{m_{r}-1}}
     {m_1^{k_1} \cdots m_r^{k_r} }
\sum_{m_{r+1}=1}^{m_r}
\left(\frac{z_{r+1}}{z_{r}}\right)^{m_{r+1}-1}.
\end{split}
\end{equation}
The proof is similar to that of \cite[Lemma 3.4]{Li}.
For $n\ge 1$, we have
    \begin{align*}
&l^{\star}
\begin{pmatrix}
k_1,\ldots , k_r, \ \ \{1\}^n\ \ \\
z_1, \ldots, z_r, \{z_{r+1}\}^n
\end{pmatrix} \\
&=
\sum_{m_1\ge \cdots \ge m_{r+n}\ge 1}
\frac{z_1^{m_1-1} (\frac{z_2}{z_1})^{m_2-1} \cdots 
       (\frac{z_{r}}{z_{r-1}})^{m_{r}-1}}
     {m_1^{k_1} \cdots m_r^{k_r}  }
\frac{(\frac{z_{r+1}}{z_r})^{m_{r+1}-1}}{m_{r+1}}
\frac{1}{m_{r+2}} \cdots \frac{1}{m_{r+n}}\\
&=
1+\sum_{m_1\ge 2}\frac{z_1^{m_1}}{m_1^{k_1}}
+\cdots + 
\sum_{m_1\ge \cdots \ge m_{r+1}\ge 2}
\frac{z_1^{m_1-1} (\frac{z_2}{z_1})^{m_2-1} \cdots 
       (\frac{z_{r}}{z_{r+1}})^{m_{r+1}-1}}
     {m_1^{k_1} \cdots m_r^{k_r} m_{r+1} }\\
&\times  
\left(  1+ \sum_{m_{r+1}\ge m_{r+2}\ge 2}
   \frac{1}{m_{r+2}} + \cdots + 
   \sum_{m_{r+1}\ge \cdots \ge m_{r+n}\ge 2}
   \frac{1}{m_{r+2}\cdots m_{r+n} }
\right).
\end{align*}
By taking $n\to \infty$, we have 
\begin{align*}
& \lim_{n\to \infty} l^{\star}
\begin{pmatrix}
k_1,\ldots , k_r, \ \ \{1\}^n\ \ \\
z_1, \ldots, z_r, \{z_{r+1}\}^n
\end{pmatrix} \\
&=
1+\sum_{m_1\ge 2}\frac{z_1^{m_1}}{m_1^{k_1}}
+\cdots \\
&\ \ \ \ 
+\sum_{m_1\ge \cdots \ge m_{r+1}\ge 2}
\frac{z_1^{m_1-1} (\frac{z_2}{z_1})^{m_2-1} \cdots 
       (\frac{z_{r+1}}{z_{r}})^{m_{r+1}-1}}
     {m_1^{k_1} \cdots m_r^{k_r} m_{r+1} }  
\prod_{m_{r+1}\ge l \ge 2} 
\left(  1+\frac{1}{l}+\frac{1}{l^2}+\cdots \right).
\end{align*}
By using the identity
\[ \prod_{m_{r+1}\ge l \ge 2} 
\left(  1+\frac{1}{l}+\frac{1}{l^2}+\cdots \right)\\
=\prod_{m_{r+1}\ge l \ge 2} 
\frac{l}{l-1}
=m_{r+1},\]
we obtain that 
\begin{align*}
 \lim_{n\to \infty} l^{\star}
\begin{pmatrix}
k_1,\ldots , k_r, \ \ \{1\}^n\ \ \\
z_1, \ldots, z_r, \{z_{r+1}\}^n
\end{pmatrix}
=
\sum_{m_1\ge \cdots \ge m_{r+1}\ge 1}
\frac{z_1^{m_1-1} (\frac{z_2}{z_1})^{m_2-1} \cdots 
       (\frac{z_{r+1}}{z_{r}})^{m_{r+1}-1}}
     {m_1^{k_1} \cdots m_r^{k_r} }
    \end{align*}
and this proves \eqref{eq:limit_lemma}.

We apply
$r=1$, $k_1=1$ and $z=z_1=z_2$ with $0<z<1$
in \eqref{eq:limit_lemma}.
Then 
\[ \displaystyle \lim_{n\to \infty}    
l^{\star}
\begin{pmatrix}
\{1\}^{n}  \\
\{z\}^{n}  
\end{pmatrix}
=
\sum_{m_1=1}^{\infty} 
\frac{z^{m_1-1}}{m_1} \sum_{m_2=1}^{m_1}
1
= \dfrac{1}{1-z}\]
and the statement (i) is proved.
The statement (ii) can be proved similarly.

When $z_r> z_{r+1}$, by \eqref{eq:limit_lemma}
we have 
\begin{align*}
& \lim_{n\to \infty} l^{\star}
\begin{pmatrix}
k_1,\ldots , k_r, \ \ \{1\}^n\ \ \\
z_1, \ldots, z_r, \{z_{r+1}\}^n
\end{pmatrix} \\
&=
\sum_{m_1\ge \cdots \ge m_{r}\ge 1}
\frac{z_1^{m_1-1} (\frac{z_2}{z_1})^{m_2-1} \cdots 
       (\frac{z_{r}}{z_{r-1}})^{m_{r}-1}}
     {m_1^{k_1} \cdots m_r^{k_r} } \cdot 
\frac{1- ( \frac{z_{r+1}}{z_r})^{m_r}}
     {1- \frac{z_{r+1}}{z_r}} \\
&=\dfrac{z_r}{z_r-z_{r+1}}
 l^{\star}\begin{pmatrix}
k_1,\ldots , k_r\\
z_1, \ldots, z_r
\end{pmatrix}
-\dfrac{z_{r+1}}{z_r-z_{r+1}}
l^{\star}\begin{pmatrix}
k_1,\ldots, k_{r-1}, k_r\\
z_1, \ldots, z_{r-1}, z_{r+1}
\end{pmatrix}
\end{align*}
and this proves (iii).
\end{proof}

In the following
we assume that
$\boldsymbol{z}=(z_1, z_2, \ldots ) \in \mathbb{R}^{\infty}$ satisfies the condition
\[ 1> z_1 \ge z_2 \ge \cdots .\]
One can consider the case $z_1=1$ when $k_1\ge 2$,
but we do not treat this case for simplicity.
For 
an index 
$\boldsymbol{k}=(k_1, k_2, \ldots ) \in
\mathbb{Z}_{>0}^{\infty}$,
we define multiple polylogarithms of infinite length as
\begin{align}\label{eq:l^star_infinite}
 l^{\star}\begin{pmatrix}
\boldsymbol{k} \\
\boldsymbol{z} 
\end{pmatrix}
:=
\lim_{r\to \infty} 
 l^{\star}\begin{pmatrix}
k_1,\ldots, k_r \\
z_1,\ldots, z_r 
\end{pmatrix}.
\end{align}
Because
\[  l^{\star}\begin{pmatrix}
k_1,\ldots, k_r \\
z_1,\ldots, z_r 
\end{pmatrix}
< l^{\star}\begin{pmatrix}
\{1\}^r \\
\{z_1\}^r 
\end{pmatrix}
\to \dfrac{1}{1-z_1}\]
as $r$ tends to infinity,
the right-hand side of 
\eqref{eq:l^star_infinite} is convergent for 
any $\boldsymbol{k}=(k_1, k_2, \ldots )\in \mathbb{Z}_{>0}^{\infty}$.
For 
$\boldsymbol{k}=(k_1,k_2,\ldots)$ and
$\boldsymbol{m}=(m_1,m_2,\ldots)
\in \mathbb{Z}_{>0}^{\infty}$,
we denote 
$\boldsymbol{k}\succ \boldsymbol{m}$
if 
$(k_1,\ldots ,k_r) \succ
(m_1,\ldots ,m_r)$ for some $r\ge 1$.
This order defines 
a strict totally order on $\mathbb{Z}_{>0}^{\infty}$.

Define a map $\eta_{\boldsymbol{z}}$ as
\begin{align*}
    \eta_{\boldsymbol{z}} : \mathbb{Z}_{>0}^{\infty} &\longrightarrow 
    (1, \tfrac{1}{1-z_1}] \\
    \boldsymbol{k} &\longmapsto 
l^{\star}\begin{pmatrix}
\boldsymbol{k}\\
\boldsymbol{z}
\end{pmatrix}.
    \end{align*}

\begin{thm}
For 
$\boldsymbol{k}$,
$\boldsymbol{m}\in \mathbb{Z}_{>0}^{\infty}$,
it holds that $\eta_{\boldsymbol{z}}(\boldsymbol{k}) > \eta_{\boldsymbol{z}}(\boldsymbol{m})$
if and only if $\boldsymbol{k} \succ \boldsymbol{m}$.
In particular, 
the map $\eta_{\boldsymbol{z}}$ is injective.
\end{thm}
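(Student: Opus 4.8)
The plan is to deduce the "only if" direction from the finite-length order theorem and a limiting argument, and then obtain "if" for free by the totality of $\succ$. For the "only if" direction, suppose $\boldsymbol{k} \succ \boldsymbol{m}$. By definition there is some $r \ge 1$ with $(k_1,\ldots,k_r) \succ (m_1,\ldots,m_r)$, and — inspecting the two ways the order can be witnessed — in fact this holds for all sufficiently large $r$. Indeed, the order between two infinite sequences is decided at the first coordinate $j$ where they differ: either $k_j < m_j$ with $k_i = m_i$ for $i<j$, and then $(k_1,\ldots,k_r) \succ (m_1,\ldots,m_r)$ for every $r \ge j$; or one sequence is not simply an extension of a common prefix, which cannot happen for genuinely infinite sequences unless they first differ at some coordinate. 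So fix $j$ minimal with $k_j \neq m_j$ and assume $k_j < m_j$. Write $a := k_j+1 \le m_j$.

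The key step is then a \emph{uniform} strict inequality at finite level that survives the limit $r \to \infty$. First I would produce, for each large $r$, a gap of the form
\[
l^{\star}\begin{pmatrix} k_1,\ldots,k_r \\ z_1,\ldots,z_r \end{pmatrix}
\;\ge\;
l^{\star}\begin{pmatrix} k_1,\ldots,k_{j-1}, k_j \\ z_1,\ldots,z_{j-1}, z_j \end{pmatrix}
\;>\;
l^{\star}\begin{pmatrix} k_1,\ldots,k_{j-1}, a, \{1\}^{r-j} \\ z_1,\ldots,z_{j-1}, z_j, \{z_j\}^{r-j} \end{pmatrix}
\;\ge\;
l^{\star}\begin{pmatrix} m_1,\ldots,m_r \\ z_1,\ldots,z_r \end{pmatrix},
\]
exactly as in the proof of the finite-length order theorem, using Lemma~\ref{lemma:1^m_inequality} (with $k_r \ge 2$ there supplied by the fact that $a = k_j+1 \ge 2$) for the outer two inequalities and the monotonicity built into the definition of $l^{\star}$ for the middle one. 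The point is that the \emph{middle} strict inequality does not depend on $r$: its left side is a fixed positive number $C := l^{\star}(k_1,\ldots,k_j; z_1,\ldots,z_j)$, and by Proposition~\ref{prop:limit_prop}(ii) the middle-right quantity
\[
l^{\star}\begin{pmatrix} k_1,\ldots,k_{j-1}, a, \{1\}^{r-j} \\ z_1,\ldots,z_{j-1}, z_j, \{z_j\}^{r-j} \end{pmatrix}
\;\xrightarrow[r\to\infty]{}\;
l^{\star}\begin{pmatrix} k_1,\ldots,k_{j-1}, k_j \\ z_1,\ldots,z_{j-1}, z_j \end{pmatrix}
\;=\; C' < C,
\]
noting $a - 1 = k_j$ and that $a \ge 2$. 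Hence for all large $r$ the chain above gives
\[
l^{\star}\begin{pmatrix} k_1,\ldots,k_r \\ z_1,\ldots,z_r \end{pmatrix}
\;\ge\; C
\;>\;
l^{\star}\begin{pmatrix} m_1,\ldots,m_r \\ z_1,\ldots,z_r \end{pmatrix},
\]
and letting $r \to \infty$ on both ends yields $\eta_{\boldsymbol{z}}(\boldsymbol{k}) \ge C$ while $\eta_{\boldsymbol{z}}(\boldsymbol{m}) \le C' < C$, so $\eta_{\boldsymbol{z}}(\boldsymbol{k}) > \eta_{\boldsymbol{z}}(\boldsymbol{m})$.

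For the converse, since $\succ$ is a strict total order on $\mathbb{Z}_{>0}^{\infty}$, if $\eta_{\boldsymbol{z}}(\boldsymbol{k}) > \eta_{\boldsymbol{z}}(\boldsymbol{m})$ then $\boldsymbol{k} \neq \boldsymbol{m}$, so either $\boldsymbol{k} \succ \boldsymbol{m}$ or $\boldsymbol{m} \succ \boldsymbol{k}$; the latter would force $\eta_{\boldsymbol{z}}(\boldsymbol{m}) > \eta_{\boldsymbol{z}}(\boldsymbol{k})$ by what we just proved, a contradiction. Hence $\boldsymbol{k} \succ \boldsymbol{m}$. Injectivity of $\eta_{\boldsymbol{z}}$ is then immediate: distinct indices are $\succ$-comparable, hence take distinct values. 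The main obstacle I anticipate is the first paragraph's bookkeeping — making precise that $\boldsymbol{k}\succ\boldsymbol{m}$ for infinite sequences is genuinely decided at a single first-difference coordinate $j$, and that the witnessing finite inequality $(k_1,\ldots,k_r)\succ(m_1,\ldots,m_r)$ then holds for \emph{all} large $r$, so that the limit can be taken cleanly; the analytic content is entirely carried by Lemma~\ref{lemma:1^m_inequality} and Proposition~\ref{prop:limit_prop}(ii), which are already available.
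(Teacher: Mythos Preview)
Your overall architecture matches the paper's: reduce to the forward direction by totality of $\succ$, locate the first coordinate $j$ where $\boldsymbol{k}$ and $\boldsymbol{m}$ differ, and sandwich via the fixed finite value $C:=l^{\star}\begin{pmatrix}k_1,\ldots,k_j\\ z_1,\ldots,z_j\end{pmatrix}$ using Lemma~\ref{lemma:1^m_inequality} and Proposition~\ref{prop:limit_prop}(ii). However, your key strictness claim is wrong as written. By Proposition~\ref{prop:limit_prop}(ii), with $a=k_j+1\ge 2$,
\[
l^{\star}\begin{pmatrix} k_1,\ldots,k_{j-1}, a, \{1\}^{r-j} \\ z_1,\ldots,z_{j-1}, z_j, \{z_j\}^{r-j} \end{pmatrix}
\xrightarrow[r\to\infty]{}
l^{\star}\begin{pmatrix} k_1,\ldots,k_{j-1}, a-1 \\ z_1,\ldots,z_{j-1}, z_j \end{pmatrix}
=
l^{\star}\begin{pmatrix} k_1,\ldots,k_j \\ z_1,\ldots,z_j \end{pmatrix}
= C,
\]
so your $C'$ equals $C$, not $C'<C$. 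With this, passing to the limit in your chain only yields $\eta_{\boldsymbol z}(\boldsymbol{k})\ge C\ge \eta_{\boldsymbol z}(\boldsymbol{m})$, and the strict inequality is lost.

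The repair is easy and is exactly where the paper puts the strictness: the \emph{first} inequality is strict, not merely $\ge$. Since $\boldsymbol{k}$ is infinite, the sequence $r\mapsto l^{\star}\begin{pmatrix}k_1,\ldots,k_r\\ z_1,\ldots,z_r\end{pmatrix}$ is strictly increasing, hence
\[
\eta_{\boldsymbol z}(\boldsymbol{k})\;\ge\;
l^{\star}\begin{pmatrix}k_1,\ldots,k_{j+1}\\ z_1,\ldots,z_{j+1}\end{pmatrix}
\;>\;
l^{\star}\begin{pmatrix}k_1,\ldots,k_j\\ z_1,\ldots,z_j\end{pmatrix}=C.
\]
Combined with $\eta_{\boldsymbol z}(\boldsymbol{m})\le C$ (which your chain does give in the limit), you obtain $\eta_{\boldsymbol z}(\boldsymbol{k})>C\ge\eta_{\boldsymbol z}(\boldsymbol{m})$. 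The paper records this in one line as
\[
\eta_{\boldsymbol z}(\boldsymbol{k})>C
= l^{\star}\begin{pmatrix}k_1,\ldots,k_j+1,\{1\}^{\infty}\\ z_1,\ldots,z_j,\{z_j\}^{\infty}\end{pmatrix}
\ge \eta_{\boldsymbol z}(\boldsymbol{m}),
\]
with the equality coming from Proposition~\ref{prop:limit_prop}(ii). Once you move the strict sign to the first step, your argument and the paper's coincide.
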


\begin{proof}
Because $(\mathbb{Z}_{>0}^{\infty}, \succ)$
is a strict totally ordered set,
we only have to prove that $\boldsymbol{k} \succ \boldsymbol{m}$
implies
$\eta_{\boldsymbol{z}}(\boldsymbol{k}) >
\eta_{\boldsymbol{z}}(\boldsymbol{m})$.

When $\boldsymbol{k} \succ \boldsymbol{m}$, 
these indices can be expressed as
$\boldsymbol{k}=(k_1,\ldots, k_{s-1}, k_s, \ldots )$
and $\boldsymbol{m}=(k_1,\ldots, k_{s-1}, m_s \ldots )$
with $k_s<m_s$ for some $s\ge 1$.
Then, by Proposition 
\ref{prop:limit_prop} (ii), we have
\begin{align*}
    l^{\star}\begin{pmatrix}
k_1,\ldots, k_s, \ldots \\
z_1,\ldots ,z_s, \ldots
\end{pmatrix}
> l^{\star}\begin{pmatrix}
k_1,\ldots, k_s \\
z_1,\ldots ,z_s
\end{pmatrix}
=
l^{\star}\begin{pmatrix}
k_1,\ldots, k_s+1, \{1\}^{\infty} \\
z_1,\ldots ,z_s, \{z_s\}^{\infty} 
\end{pmatrix}
\ge 
l^{\star}\begin{pmatrix}
k_1,\ldots, k_{s-1}, m_{s},\ldots  \\
z_1,\ldots ,z_{s-1}, z_{s}, \ldots  
\end{pmatrix}
\end{align*}
and this means that 
$\eta_{\boldsymbol{z}}(\boldsymbol{k}) >
\eta_{\boldsymbol{z}}(\boldsymbol{m})$.
\end{proof}

The map $\eta_{\boldsymbol{z}}$
is injective by this theorem,
but it is not bijective in general
as we will see in Theorem \ref{thm:mainthm_bijective}.

The following lemma is an analogue of 
\cite[Lemma 3.2]{Li}.
\begin{lemma}\label{lemma:Delta_r}
For $\boldsymbol{k}=(k_1,k_2,\ldots ) \in\mathbb{Z}_{>0}^{\infty}$
and $0<z<1$,
set $ \Delta_r:= 
l^{\star}\begin{pmatrix}
k_1,\ldots, k_r,1\\
z, \ldots , z, z
\end{pmatrix}
-
l^{\star}\begin{pmatrix}
k_1,\ldots, k_r\\
z, \ldots , z
\end{pmatrix}$.
Then $\displaystyle\lim_{r\to\infty} \Delta_r =0$.
\end{lemma}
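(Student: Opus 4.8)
The plan is to expand $\Delta_r$ as a genuinely convergent series, isolate the outermost summation variable $m_1$, and estimate the remaining multiple sum by a geometric series whose radius of convergence is $2$ rather than $1$; this last point is what makes the argument go through.

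First I would write both terms of $\Delta_r$ as series. Since all the $z_i$ equal $z$ and the appended index entry is $1$, the definition of $l^{\star}$ gives
\[
\Delta_r=\sum_{m_1\ge\cdots\ge m_{r+1}\ge 1}\frac{z^{m_1-1}}{m_1^{k_1}\cdots m_r^{k_r}\,m_{r+1}}-\sum_{m_1\ge\cdots\ge m_r\ge 1}\frac{z^{m_1-1}}{m_1^{k_1}\cdots m_r^{k_r}},
\]
both series having non-negative terms and being bounded above by $l^{\star}\bigl(\{1\}^{r+1};\{z\}^{r+1}\bigr)\le\tfrac1{1-z}$. Splitting the first sum according to whether $m_{r+1}=1$ or $m_{r+1}\ge 2$, the part with $m_{r+1}=1$ is exactly the second sum, so
\[
\Delta_r=\sum_{m_1\ge\cdots\ge m_r\ge 2}\frac{z^{m_1-1}}{m_1^{k_1}\cdots m_r^{k_r}}\bigl(H_{m_r}-1\bigr)\ \ge\ 0,\qquad H_n:=\sum_{j=1}^n\tfrac1j ;
\]
in particular it suffices to bound $\Delta_r$ from above by a quantity tending to $0$.

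Next I would discard everything inessential. Since every $k_i\ge 1$ we have $m_i^{k_i}\ge m_i$, and since $m_r\le m_1$ we have $H_{m_r}-1=\sum_{j=2}^{m_r}\tfrac1j\le\log m_r\le\log m_1$, hence
\[
\Delta_r\ \le\ \sum_{m_1\ge 2}\frac{z^{m_1-1}\log m_1}{m_1}\,C_r(m_1),\qquad C_r(m_1):=\sum_{m_1\ge m_2\ge\cdots\ge m_r\ge 2}\frac{1}{m_2\cdots m_r}.
\]
The crux is the estimate $C_r(m_1)\le 2m_1^2(2/3)^{r-1}$. To prove it I would use the generating identity
\[
\sum_{s\ge 1}C_s(m_1)\,t^{\,s-1}=\prod_{m=2}^{m_1}\frac{1}{1-t/m}\qquad(|t|<2),
\]
obtained by expanding each factor as a geometric series and recording multiplicities; specializing to $t=3/2$ and using that all coefficients are non-negative gives $C_r(m_1)(3/2)^{r-1}\le\prod_{m=2}^{m_1}\frac{2m}{2m-3}\le 4\prod_{m=3}^{m_1}\frac{m}{m-2}=2m_1(m_1-1)$, the last product telescoping. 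Feeding this back yields $\Delta_r\le 2\bigl(\tfrac23\bigr)^{r-1}\sum_{m_1\ge 2}z^{m_1-1}m_1\log m_1$, and the series on the right is finite because $0<z<1$; letting $r\to\infty$ finishes the proof.

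The step I expect to be the real obstacle is the bound on $C_r(m_1)$. The naive estimate $\tfrac1{m_2\cdots m_r}\le 2^{-(r-1)}$ together with the count $\binom{m_1+r-3}{r-1}$ of admissible tuples does give, for each fixed $m_1$, a bound decaying in $r$; but its supremum over $r$ grows like $m_1^{m_1}$, so it cannot be summed against $z^{m_1-1}$ and the interchange of limits collapses. Passing through the generating function — equivalently, exploiting that the smallest value the $m_i$ may take here is $2$, so the relevant power series converges on a disc of radius $2>1$ — is exactly what separates the $(2/3)^{r-1}$ decay in $r$ from the merely polynomial growth in $m_1$. With that estimate in hand, everything else is bookkeeping.
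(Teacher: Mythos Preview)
Your argument is correct, and in fact yields more than the lemma asks: an explicit geometric rate $\Delta_r=O((2/3)^r)$. The paper, however, takes a much shorter route. After the same opening reduction
\[
\Delta_r=\sum_{m_1\ge\cdots\ge m_{r+1}\ge 2}\frac{z^{m_1-1}}{m_1^{k_1}\cdots m_r^{k_r}m_{r+1}}
\ \le\ \sum_{m_1\ge\cdots\ge m_{r+1}\ge 2}\frac{z^{m_1-1}}{m_1\cdots m_{r+1}},
\]
it simply \emph{recognises} the right-hand side as
$l^{\star}\!\begin{pmatrix}\{1\}^{r+1}\\\{z\}^{r+1}\end{pmatrix}-l^{\star}\!\begin{pmatrix}\{1\}^{r}\\\{z\}^{r}\end{pmatrix}$,
the consecutive difference of an increasing sequence already known (Proposition~\ref{prop:limit_prop}(i)) to converge to $\tfrac1{1-z}$; such differences automatically tend to $0$. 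So where you isolate $m_1$, introduce $C_r(m_1)$, and invoke the generating function $\prod_{m=2}^{m_1}(1-t/m)^{-1}$ to extract the $(2/3)^{r-1}$ decay, the paper never separates $m_1$ from the rest and avoids any quantitative estimate. Your approach buys a rate of convergence (potentially useful elsewhere), at the cost of the generating-function detour; the paper's approach buys brevity by trading that rate for the soft fact ``Cauchy $\Rightarrow$ differences vanish.'' Both are valid; the paper's is the one-line version of what you did.
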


\begin{proof}
We have
\begin{equation}\label{eq:add_lemma}
\begin{split}
0<\Delta_r 
&= \sum_{m_1\ge \cdots \ge m_{r+1}\ge 2}
\frac{z^{m_1-1}
}{m_1^{k_1}\cdots m_r^{k_r}m_{r+1}}\\
& \le 
\sum_{m_1\ge \cdots \ge m_{r+1}\ge 2}
\frac{z^{m_1-1}}{m_1\cdots m_rm_{r+1}}\\
&=
l^{\star}\begin{pmatrix}
 \{1\}^{r+1}  \\
 \{z\}^{r+1}
\end{pmatrix}
-
l^{\star}\begin{pmatrix}
 \{1\}^{r}  \\
 \{z\}^{r}
 \end{pmatrix}.
\end{split}
\end{equation}
Since 
$\displaystyle \lim_{r\to \infty} 
l^{\star}\begin{pmatrix}
 \{1\}^{r}  \\
 \{z\}^{r}  
\end{pmatrix} $
is convergent, the last equality in
\eqref{eq:add_lemma}
tends to zero as $r\to \infty$.
This proves that 
$\displaystyle\lim_{r\to\infty}\Delta_r=0$.
\end{proof}

\begin{lemma}\label{lemma:non-dense}
When $z_r>z_{r+1}$, we have
$l^{\star}\begin{pmatrix}
k_1,\ldots, k_r\\
z_1, \ldots, z_r
\end{pmatrix}
>
l^{\star}\begin{pmatrix}
k_1,\ldots, k_r+1, \{1\}^{\infty}\\
\boldsymbol{z}
\end{pmatrix}$.

\end{lemma}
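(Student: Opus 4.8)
The plan is to rewrite the right-hand side so that Proposition~\ref{prop:limit_prop}(iii) applies, evaluate it, and then reduce the desired inequality to an elementary pointwise estimate which is checked term by term in the series defining $l^{\star}$. First I would use monotonicity of $l^{\star}$ in its parameters: for $0<z_1<1$ every summand of a truncation $l^{\star}\begin{pmatrix}k_1,\ldots,k_N\\ z_1,\ldots,z_N\end{pmatrix}$ is a product of the $z_i$ raised to nonnegative integer powers, hence is nondecreasing in every $z_i$, and therefore so are the truncation and its limit $l^{\star}\begin{pmatrix}\boldsymbol{k}\\ \boldsymbol{z}\end{pmatrix}$. Since $z_{r+1}\ge z_{r+1+j}$ for all $j\ge1$, raising the parameters $z_{r+2},z_{r+3},\dots$ all up to $z_{r+1}$ (in each truncation, hence in the limit) keeps the parameter sequence nonincreasing and can only increase the value, so
\[
l^{\star}\begin{pmatrix}k_1,\ldots,k_r+1,\{1\}^{\infty}\\ \boldsymbol{z}\end{pmatrix}\le
l^{\star}\begin{pmatrix}k_1,\ldots,k_r+1,\{1\}^{\infty}\\ z_1,\ldots,z_r,\{z_{r+1}\}^{\infty}\end{pmatrix}.
\]

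Because $z_r>z_{r+1}$, Proposition~\ref{prop:limit_prop}(iii) applied to the index $(k_1,\ldots,k_{r-1},k_r+1)$ identifies the right-hand quantity with $\dfrac{z_r}{z_r-z_{r+1}}B-\dfrac{z_{r+1}}{z_r-z_{r+1}}C$, where $B=l^{\star}\begin{pmatrix}k_1,\ldots,k_r+1\\ z_1,\ldots,z_r\end{pmatrix}$ and $C=l^{\star}\begin{pmatrix}k_1,\ldots,k_{r-1},k_r+1\\ z_1,\ldots,z_{r-1},z_{r+1}\end{pmatrix}$. Writing $A=l^{\star}\begin{pmatrix}k_1,\ldots,k_r\\ z_1,\ldots,z_r\end{pmatrix}$ for the left-hand side of the lemma, multiplying through by the positive number $z_r-z_{r+1}$ and rearranging (using $A-C=(A-B)+(B-C)$) shows that it suffices to prove the inequality $(z_r-z_{r+1})(A-B)>z_{r+1}(B-C)$.

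To prove that, I would expand $A$, $B$, $C$ as series over $m_1\ge\cdots\ge m_r\ge1$: all three carry the common positive weight $w_{\boldsymbol{m}}:=z_1^{m_1-m_2}\cdots z_{r-1}^{m_{r-1}-m_r}\big/\bigl(m_1^{k_1}\cdots m_{r-1}^{k_{r-1}}m_r^{k_r+1}\bigr)$, with $A-B$ arising from the factor $m_r^{-k_r}-m_r^{-k_r-1}=(m_r-1)m_r^{-k_r-1}$ and $B-C$ from the factor $z_r^{m_r-1}-z_{r+1}^{m_r-1}$, so that
\[
(z_r-z_{r+1})(A-B)-z_{r+1}(B-C)=\sum_{m_1\ge\cdots\ge m_r\ge1}w_{\boldsymbol{m}}\Bigl[(z_r-z_{r+1})(m_r-1)z_r^{m_r-1}-z_{r+1}\bigl(z_r^{m_r-1}-z_{r+1}^{m_r-1}\bigr)\Bigr].
\]
Hence it is enough to show that for reals $0<b<a$ one has $b\,(a^{n-1}-b^{n-1})\le(a-b)(n-1)a^{n-1}$ for every integer $n\ge1$, with equality only at $n=1$; for $n\ge2$ this is immediate from $a^{n-1}-b^{n-1}=(a-b)\sum_{i=0}^{n-2}a^{\,n-2-i}b^{\,i}\le(a-b)(n-1)a^{n-2}$ together with $b<a$, while $n=1$ gives $0\le0$. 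Taking $a=z_r$, $b=z_{r+1}$, $n=m_r$, every bracket above is $\ge0$, and it is $>0$ whenever $m_r\ge2$ (for instance at $m_1=\cdots=m_r=2$, where $w_{\boldsymbol{m}}>0$ as well); hence the displayed sum is strictly positive and the lemma follows.

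I do not expect a genuine obstacle. The only point needing care is the bookkeeping in the last step — lining up the three series so that the bracketed coefficient is manifestly nonnegative and nonzero for at least one admissible tuple $(m_1,\ldots,m_r)$ — together with the (immediate) check that the hypothesis $z_r>z_{r+1}$ required to invoke Proposition~\ref{prop:limit_prop}(iii) is exactly the one assumed in the lemma.
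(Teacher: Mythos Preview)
Your argument is correct. The monotonicity reduction is justified term by term (the exponent of each $z_i$ in a summand is the nonnegative integer $m_i-m_{i+1}$, or $m_N-1$ for the last one), the invocation of Proposition~\ref{prop:limit_prop}(iii) is legitimate since $z_r>z_{r+1}$, and the final termwise estimate $b(a^{n-1}-b^{n-1})\le (a-b)(n-1)a^{n-1}$ with strict inequality for $n\ge 2$ gives exactly what is needed; the $m_1=\cdots=m_r=2$ term then supplies the strict positivity.

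The paper takes a different route. Rather than first collapsing the tail $(z_{r+1},z_{r+2},\ldots)$ to the constant sequence $(z_{r+1},z_{r+1},\ldots)$ and then applying Proposition~\ref{prop:limit_prop}(iii), it works directly with the finite truncations: for each $n\ge 1$ it writes
\[
l^{\star}\begin{pmatrix}k_1,\ldots,k_r+1,\{1\}^n\\ z_1,\ldots,z_r,\{z_r\}^n\end{pmatrix}
-
l^{\star}\begin{pmatrix}k_1,\ldots,k_r+1,\{1\}^n\\ z_1,\ldots,z_r,z_{r+1},\ldots,z_{r+n}\end{pmatrix}
\]
as a series with manifestly nonnegative terms (since each ratio $z_{r+j}/z_{r+j-1}\le 1$), isolates the single summand at $m_1=\cdots=m_{r+1}=2$, $m_{r+2}=\cdots=m_{r+n}=1$, which equals the $n$-independent positive constant $z_r(1-z_{r+1}/z_r)/2^{k_1+\cdots+k_r+2}$, and then lets $n\to\infty$ using Proposition~\ref{prop:limit_prop}(ii) for the first term. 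Your approach trades that uniform-in-$n$ estimate for a closed-form evaluation via Proposition~\ref{prop:limit_prop}(iii) followed by an elementary inequality; it requires the extra monotonicity step but in exchange identifies the entire gap (after the reduction) as a sum of nonnegative pieces rather than extracting just one. Both arguments ultimately pin strictness on the same tuple $m_1=\cdots=m_r=2$.
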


\begin{proof}
For $n\ge 1$, we have
\begin{align*}
&l^{\star}\begin{pmatrix}
k_1,\ldots, k_r+1, \{1\}^{n}\\
z_1,\ldots, z_r, \{z_{r}\}^n
\end{pmatrix}
- 
l^{\star}\begin{pmatrix}
k_1,\ldots, k_r+1, \ \ \ \ \ \{1\}^{n}\ \ \ \ \ \\
z_1, \ldots, z_r, \ \ \ \ z_{r+1}, \ldots ,z_{r+n}
\end{pmatrix}\\
&=
\sum_{m_1\ge \cdots \ge m_r \ge \cdots 
m_{r+n}\ge 1}
 \dfrac{z_1^{m_1-1} (\frac{z_2}{z_1})^{m_2-1}
 \cdots (\frac{z_r}{z_{r-1}})^{m_r-1} }
 {m_1^{k_1} m_2^{k_2}\cdots m_r^{k_r+1}  }
\cdot
 \dfrac{1- (\frac{z_{r+1}}{z_r})^{m_{r+1}-1} 
 \cdots (\frac{z_{r+n}}{z_{r+n-1}})^{m_{r+n}-1}
 }
 {m_{r+1}\cdots  m_{r+n} }\\
&=
\dfrac{z_r (1-\frac{z_{r+1}}{z_r})}
{2^{k_1+\cdots +k_r+2}}
+ P_n(z_r, z_{r+1};\boldsymbol{k}).
\end{align*}
Here the first part
comes from the term for 
$m_1=\cdots =m_{r+1}=2$ and $m_{r+2}=\cdots =m_{r+n}=1$
in the sum 
and it does not depend on $n$.
The second remainder part 
$P_n(z_r, z_{r+1};\boldsymbol{k})$ 
is a positive number.
By taking $n\to \infty$, 
we obtain the desired inequality.
\end{proof}

Let us introduce the arrow notation
$\boldsymbol{k}_{\rightarrow}
= (k_1,\ldots ,k_r, 1)$
and $\boldsymbol{k}_{\uparrow}
= (k_1,\ldots ,k_{r-1}, k_r+1)$
for an index $\boldsymbol{k}=(k_1,\ldots, k_r)\in 
\mathbb{Z}_{>0}^r$.
This notation will be used in the proof of the 
following theorem.

\begin{thm}\label{thm:mainthm_bijective}
Let $\boldsymbol{z}=\{z_i\}_{i\ge 1}$ be a sequence
satisfying $1>z_1\ge z_2 \ge \cdots $.
Then the following conditions are equivalent.
\begin{enumerate}
\item 
All $z_i$'s ($i\ge 1$) are equal.

\item 
The map $\eta_{\boldsymbol{z}}$ is bijective.

\item The set $L_{\boldsymbol{z}}:= 
\left\{  
l^{\star}\begin{pmatrix}
k_1,\ldots ,k_r\\
z_1, \ldots ,z_r
\end{pmatrix}  \mid r\ge 1, (k_1,\ldots, k_r) \in 
\mathbb{Z}_{>0}^{r} \right\}$
is dense in $\left(1, \tfrac{1}{1-z_1} \right]$.
\end{enumerate}
\end{thm}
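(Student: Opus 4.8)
The plan is to prove the cycle $(i)\Rightarrow(ii)\Rightarrow(iii)\Rightarrow(i)$, with the implication $(iii)\Rightarrow(i)$ being essentially the contrapositive packaged via Lemma~\ref{lemma:non-dense}, and $(i)\Rightarrow(ii)$ being the substantial analytic part that mimics Li's argument for MZSV.

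For $(i)\Rightarrow(ii)$: assume all $z_i$ equal a common value $z$ with $0<z<1$. Since $\eta_{\boldsymbol{z}}$ is already known to be injective (by the preceding theorem) and to land in $(1,\tfrac{1}{1-z}]$, it remains to prove surjectivity. Fix a target $a\in(1,\tfrac{1}{1-z}]$. I would construct an index $\boldsymbol{k}=(k_1,k_2,\ldots)$ greedily: having chosen $k_1,\ldots,k_{r-1}$, use the ``arrow'' moves to pin down $k_r$. The key inputs are Proposition~\ref{prop:limit_prop}(ii), which in the all-equal case gives
\[
l^{\star}\begin{pmatrix} k_1,\ldots,k_{r-1}, k_r, \{1\}^{\infty} \\ z,\ldots,z,\ z,\ \{z\}^{\infty}\end{pmatrix}
= l^{\star}\begin{pmatrix} k_1,\ldots,k_{r-1},k_r-1 \\ z,\ldots,z,\ z\end{pmatrix}\quad (k_r\ge 2),
\]
and Lemma~\ref{lemma:Delta_r}, which guarantees $\Delta_r\to 0$. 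The greedy procedure selects, at stage $r$, the unique integer $k_r$ so that appending $k_r$ keeps the running value weakly below $a$ while appending $k_r-1$ (or, if $k_r=1$ would be forced forever, switching to the $\uparrow$ move) would overshoot; the monotonicity of $l^{\star}$ under the order $\succ$ ensures this is well-defined and that the partial values $l^{\star}\!\begin{pmatrix}k_1,\ldots,k_r\\ z,\ldots,z\end{pmatrix}$ form a weakly increasing sequence bounded by $a$. Because the ``gap'' between the value before and after an $\uparrow$-move at position $r$ is controlled by $\Delta_r$ (the difference between a $\rightarrow$-step and doing nothing), and $\Delta_r\to 0$, the limit of the partial values must equal $a$ exactly — any shortfall $a-\lim$ would contradict the greedy maximality at all sufficiently large stages. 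This forces $\eta_{\boldsymbol{z}}(\boldsymbol{k})=a$, giving surjectivity.

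For $(ii)\Rightarrow(iii)$: surjectivity of $\eta_{\boldsymbol{z}}$ onto $(1,\tfrac{1}{1-z_1}]$ means every point of that interval is a value $l^{\star}\!\begin{pmatrix}\boldsymbol{k}\\\boldsymbol{z}\end{pmatrix}$ of infinite length; since each such infinite-length value is by definition $\lim_{r\to\infty} l^{\star}\!\begin{pmatrix}k_1,\ldots,k_r\\ z_1,\ldots,z_r\end{pmatrix}$, it is approximated arbitrarily well by finite-length values, which are precisely the elements of $L_{\boldsymbol{z}}$. Hence $L_{\boldsymbol{z}}$ is dense in $(1,\tfrac{1}{1-z_1}]$.

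For $(iii)\Rightarrow(i)$: I argue by contraposition. Suppose the $z_i$ are not all equal, so $z_r>z_{r+1}$ for some $r$; fix the smallest such $r$. By Lemma~\ref{lemma:non-dense},
\[
l^{\star}\begin{pmatrix} k_1,\ldots,k_r\\ z_1,\ldots,z_r\end{pmatrix}
> l^{\star}\begin{pmatrix} k_1,\ldots,k_r+1,\{1\}^{\infty}\\ \boldsymbol{z}\end{pmatrix}
\]
for every choice of $k_1,\ldots,k_r$. Now I would show this creates a genuine gap in $L_{\boldsymbol{z}}$: using the order-preservation theorem for finite $l^{\star}$ together with the infinite-length comparison theorem, every finite-length value $l^{\star}\!\begin{pmatrix}m_1,\ldots,m_s\\ z_1,\ldots,z_s\end{pmatrix}$ whose index begins $(k_1,\ldots,k_r)$ and is $\succ$-below $(k_1,\ldots,k_r+1,\{1\}^\infty)$ is actually $\le l^{\star}\!\begin{pmatrix}k_1,\ldots,k_r+1,\{1\}^\infty\\\boldsymbol{z}\end{pmatrix}$ (it is $\succeq$ some truncation that is itself $\preceq (k_1,\ldots,k_r+1,\{1\}^\infty)$), while every value $\succ$-above $(k_1,\ldots,k_r)$ with the same prefix is $\ge l^{\star}\!\begin{pmatrix}k_1,\ldots,k_r\\z_1,\ldots,z_r\end{pmatrix}$, and values with a different prefix are separated away by the order structure. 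Thus the open interval strictly between these two numbers contains no element of $L_{\boldsymbol{z}}$, contradicting density. Therefore $(iii)$ forces all $z_i$ equal.

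\textbf{Main obstacle.} The delicate point is $(i)\Rightarrow(ii)$: making the greedy construction precise, in particular handling the bookkeeping of when to apply a $\rightarrow$-move versus an $\uparrow$-move and verifying that the resulting index genuinely lies in $\mathbb{Z}_{>0}^{\infty}$ (no premature termination) and that the limit hits $a$ on the nose rather than falling short. This is exactly where Lemma~\ref{lemma:Delta_r} is indispensable, and where following the structure of \cite[proof of Theorem~1.3]{Li} most closely will pay off. A secondary technical nuisance is the uniform separation argument in $(iii)\Rightarrow(i)$: one must be careful that the gap exhibited by Lemma~\ref{lemma:non-dense} is not filled by finite-length values coming from indices with a completely different shape, which is handled by invoking the total order $\succ$ and its compatibility with the values of $l^{\star}$.
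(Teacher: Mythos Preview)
Your proposal is correct and follows essentially the same approach as the paper: the surjectivity argument under $(i)$ is the greedy/arrow construction driven by Proposition~\ref{prop:limit_prop}(ii) and Lemma~\ref{lemma:Delta_r}, and the failure of density under $\neg(i)$ is exactly the gap produced by Lemma~\ref{lemma:non-dense} together with order-preservation. The only cosmetic difference is organizational: you run the cycle $(i)\Rightarrow(ii)\Rightarrow(iii)\Rightarrow(i)$ (inserting the easy step $(ii)\Rightarrow(iii)$), whereas the paper shows $(i)\Rightarrow(ii)\wedge(iii)$ and $\neg(i)\Rightarrow\neg(ii)\wedge\neg(iii)$ directly; your treatment of the gap argument is in fact more explicit than the paper's, which simply asserts that no element of $L_{\boldsymbol z}$ lies in $(w-\varepsilon,w)$.
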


\begin{proof}
First we prove that 
the condition (i) 
includes (ii) and (iii).
We assume that 
$z=z_1=z_2=z_3=\cdots $ and 
denote 
$l^{\star}\begin{pmatrix}
k_1,\ldots, k_r\\
z, \ldots , z
\end{pmatrix}$ by
$l^{\star}(k_1,\ldots, k_r)$
for simplicity.
We also set 
$L(z):= \left\{  l^{\star}(k_1,\ldots, k_r) \mid r\ge 1, k_1,\ldots, k_r \ge 1  \right\} \subset \left( 1, \frac{1}{1-z} \right]$.
For $x\in  \left( 1, \frac{1}{1-z} \right]$, we consider the 
following three cases:
\begin{itemize}

\item 
If $x=\frac{1}{1-z}$, then 
$x=l^{\star} (1,1,\ldots)$.
\item 
If $x\in L(z)$, say $x=
l^{\star}(m_1,\ldots, m_s)$, then
$x= 
l^{\star}(m_1,\ldots, m_s+1, \{1\}^{\infty})$ by
Proposition \ref{prop:limit_prop} (ii).

\item If $x\in \left( 1, \frac{1}{1-z} \right)
\setminus L(z)$, then
because 
\[ 1< \cdots < l^{\star}(3)< l^{\star}(2)< l^{\star}(1)<
l^{\star}(1,1)<l^{\star}(1,1,1)<\cdots <\frac{1}{1-z} \]
and $\lim_{k\to \infty}l^{\star}(k) =1$ and 
$\lim_{m\to \infty}l^{\star}(\{1\}^m) = \frac{1}{1-z}$,
there exists $k\ge 1$ with $l^{\star}(k+1)<x<l^{\star}(k) $
or $m\ge 1$ with $l^{\star}(\{1\}^m)<x<l^{\star}(\{1\}^{m+1})$.

When $l^{\star}(\boldsymbol{k}_{\uparrow})
<x < l^{\star}(\boldsymbol{k})$
with $\boldsymbol{k}\in \mathbb{Z}_{>0}^r$,
there exists $n\ge 1$ such that 
$l^{\star}(\boldsymbol{k}_{\uparrow},\{1\}^{n-1})<x<
l^{\star}(\boldsymbol{k}_{\uparrow},\{1\}^{n})$
because 
$\lim_{n\to\infty}l^{\star}(\boldsymbol{k}_{\uparrow}, \{1\}^n)=
l^{\star}(\boldsymbol{k})$.

When $l^{\star}(\boldsymbol{k})
<x < l^{\star}(\boldsymbol{k}_{\rightarrow})$,
there exists $n\ge 1$ such that 
$l^{\star}(\boldsymbol{k},n+1)
<x< l^{\star}(\boldsymbol{k},n)$
because 
$\lim_{n\to\infty}l^{\star}(\boldsymbol{k}, n)=
l^{\star}(\boldsymbol{k})$.

So there exists an index
$\boldsymbol{k}=(k_1,k_2, \ldots ) \in\mathbb{Z}_{>0}^{\infty}$
such that 
\[ l^{\star}(k_1,\ldots, k_r+1) <x< l^{\star}(k_1,\ldots, k_r)
\text{ or }
l^{\star}(k_1,\ldots, k_r) <x< l^{\star}(k_1,\ldots, k_r,1)\]
holds for infinitely many $r$.
Since $l^{\star}(k_1,\ldots, k_r)
\le l^{\star}(k_1,\ldots, k_{r-1},1)$,
the former condition induces 
$l^{\star}(k_1,\ldots, k_{r-1})
<x< l^{\star}(k_1,\ldots, k_{r-1},1)$.
Consequently, by Lemma \ref{lemma:Delta_r},
the limit value 
$\lim_{r\to \infty} l^{\star}(k_1,\ldots, k_r)$ exists and 
it equals $x$.
\end{itemize}
This means that $\eta_{\boldsymbol{z}}$ is surjective
and the set $L_{\boldsymbol{z}}$ is dense in 
$\left(1, \tfrac{1}{1-z} \right]$.

Next, we prove that 
if the condition (i) is not satisfied,
then 
neither of the two conditions (ii) nor (iii) is satisfied.
Assume that there exists $r\ge 1$ 
such that $z_r>z_{r+1}$.
For an index $\boldsymbol{k}=(k_1,\ldots, k_r)\in\mathbb{Z}_{>0}^r$,
set $w=
l^{\star}\begin{pmatrix}
k_1,\ldots, k_r\\
z_1, \ldots , z_r
\end{pmatrix}$.
By Lemma \ref{lemma:non-dense},
for sufficiently small $\varepsilon>0$,
there is no element of $L_{\boldsymbol{z}}$
in the interval $(w-\varepsilon, w)$.
Hence the map $\eta_{\boldsymbol{z}}$
is not bijective and 
$L_{\boldsymbol{z}}$ is not dense in 
$\left( 1, \frac{1}{1-z_1} \right]$.

\end{proof}

\noindent \textbf{\large Acknowledgment}\\[5pt]
The author would like to thank Professor Minoru Hirose for useful comments. This work was supported by JSPS KAKENHI Grant Number 20K03523.
\vspace{10pt}

\vspace{10pt}

\noindent \textbf{Address}: Department of Robotics, Osaka Institute of Technology\\
1-45 Chaya-machi, Kita-ku, Osaka 530-8585, Japan\\

\noindent \textbf{E-mail}: ken.kamano@oit.ac.jp

\end{document}